\theoremstyle{plain}
\newtheorem{theorem}{Theorem}[section]
\newtheorem{lemma}[theorem]{Lemma}
\newtheorem{proposition}[theorem]{Proposition}
\theoremstyle{definition}
\numberwithin{equation}{section}
\newcommand{\AlignFootnote}[1]{%
    \ifmeasuring@
    \else
        \footnote{#1}%
    \fi
}
\title{Finite and profinite groups with small Engel sinks of $p$-elements}
\author[L. Dal Berto]{Lucas Dal Berto} 
\address{Lucas Dal Berto: Department of Mathematics, University of Brasilia, Brasilia, DF, Brazil}
\email{lucasmatheusdelimadalberto@gmail.com}
\author[J.Caldeira]{Jhone Caldeira} 
\address{Jhone Caldeira: Institute of Mathematics and Statistics, Federal University of Goiás, Goiânia, GO, Brazil}
\email{jhone@ufg.br}
\author[P. Shumyatsky]{Pavel Shumyatsky} 
\address{Pavel Shumyatsky: Department of Mathematics, University of Brasilia, Brasilia, DF, Brazil}
\email{pavel@unb.br}
\thanks{The work of the first author was supported by CAPES.\\\indent The work of the third author was supported by FAPDF and CNPq}
\keywords{Finite groups, profinite groups, Engel sinks}
\subjclass[2020]{ 20D25, 20E18, 20F45}
\newtheorem*{theorem*}{Theorem \ref{main}}
\newtheorem*{theorem**}{Theorem \ref{h_lim}}
\begin{document}

\maketitle

\begin{abstract}
A (left) Engel sink of an element $g$ of a group $G$ is a subset containing all sufficiently long commutators $[...[[x,g],g],\dots,g]$, where $x$ ranges over $G$. We prove that if $p$ is a prime and $G$ a finite group in which, for some positive integer $m$, every $p$-element has an Engel sink of cardinality at most $m$, then $G$ has a normal subgroup $N$ such that $G/N$ is a $p'$-group and the index $[N:O_p(G)]$ is bounded in terms of $m$ only.

\noindent Furthermore, if $G$ is a profinite group in which every $p$-element possesses a finite Engel sink, then $G$ has a normal subgroup $N$ such that $N$ is virtually pro-$p$ while $G/N$ is a pro-$p'$ group.
 \end{abstract}

\section{Introduction}

Recall that a group $G$ is Engel if for any elements $x,y\in G$ there is a positive number $n$ such that $[x,{}_ny]=1$. Throughout, we use the left-normed commutator notation $[x,{}_ny]=[x,y,\dots,y]$, where $y$ is repeated $n$ times. More generally, an element $y\in G$ is called (left) Engel if for any $x\in G$ there is a number $n$ such that $[x,{}_ny]=1$. Finite Engel groups are nilpotent by Zorn’s theorem \cite[12.3.4]{robinson}. Moreover, by Baer’s theorem \cite[12.3.7]{robinson} a left Engel element of a finite group belongs to the Fitting subgroup.

An interesting generalisation of the Engel condition was considered in several recent papers \cite{ksijac,ks_almost,ks_engel}. A (left) Engel sink of an element $g\in G$ is a set $\mathscr{E} (g)$ such that for every $x\in G$ all sufficiently long commutators $[x,g,g,\dots,g]$ belong to $\mathscr{E}(g)$, that is, for every $x\in G$ there is a positive integer $l(x,g)$ such that $[x,{}_lg]\in\mathscr{E}(g)$ for all $l\geq l(x, g)$.

When $G$ is a finite group, then every element has the smallest (left) Engel sink, since the intersection of two Engel sinks $\mathscr{E}(g)$ and $\mathscr{E}''(g)$ is again an Engel sink of $g$. Throughout this paper the symbol $\mathscr{E}(g)$ stands for the minimal Engel sink of $g$. It was shown in \cite{ksijac} that if $G$ is a finite group in which $\mathscr{E}(g)$ has cardinality at most $m$ for every $g\in G$, then $G$ has a normal subgroup $N$ such that $G/N$ is nilpotent and the order of $N$ is bounded in terms of $m$ only. This is a natural extension of the aforementioned Zorn's theorem.

In the present paper we consider finite groups $G$ such that, for a fixed prime $p$, all $p$-elements of $G$ have sinks of size at most $m$. Obviously, if $\mathscr{E}(g)=1$ for every $p$-element $g\in G$, then $G/O_p(G)$ is a $p'$-group. Here, as usual, $O_p(G)$ stands for the largest normal $p$-subgroup of a finite group $G$. It is clear that if $m\geq2$ is the minimum of cardinalities of $\mathscr{E}(g)$, then necessarily $m\equiv1$ modulo $p$. This is because $\langle g\rangle$ acts by conjugation on the set $\mathscr{E}(g)\setminus\{1\}$ and there are no fixed points under this action. So, in particular, $m\neq2$. On the other hand, for any odd $m$ there is a group in which $|\mathscr{E}(g)|\leq m$ for every $2$-element $g\in G$ (consider the dihedral group of order $2m$).

The purpose of this paper is to prove the following theorem.

\begin{theorem}\label{main1} Let $p$ be a prime and $G$ a finite group such that $|\mathscr{E}(g)| \leq m$ for all $p$-elements $g\in G$. Then $G$ has a normal subgroup $N$ such that $G/N$ is a $p'$-group and the index $[N:O_p(G)]$ is bounded in terms of $m$ only.
\end{theorem}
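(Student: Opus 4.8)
The plan is to reduce the statement to a combination of the known result from \cite{ksijac} together with a careful analysis of how $p$-elements act. Here is how I would proceed.

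\medskip

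\emph{Step 1: The soluble case and the structure of $O_{p',p}(G)$.} First I would look at $\bar G=G/O_{p'}(G)$ (or more precisely work modulo the appropriate radical). Let $g$ be a $p$-element. Its Engel sink $\mathscr{E}(g)$ controls the action of $g$ on $O_{p'}(G)$: for every $x\in O_{p'}(G)$, the commutators $[x,{}_lg]$ eventually land in $\mathscr{E}(g)$, and since $\langle g\rangle$ acts coprimely on $O_{p'}(G)$ one has $[O_{p'}(G),{}_lg]=[O_{p'}(G),g]$ for large $l$, so $[O_{p'}(G),g]\subseteq \mathscr{E}(g)$ and hence $|[O_{p'}(G),g]|\le m$. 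Thus every $p$-element induces on $O_{p'}(G)$ an automorphism whose commutator subgroup has order at most $m$. The subgroup of $\mathrm{Aut}(O_{p'}(G))$ generated by such elements is then bounded by a function of $m$ via standard arguments (an automorphism $\alpha$ of a group $A$ with $|[A,\alpha]|\le m$ fixes a subgroup of bounded index and of bounded-depth normal closure; invoke the Hall--Higman / coprime-action machinery, or directly: $C_A(\alpha)$ has index at most $m$ so $A$ has a characteristic subgroup of $(m!)$-bounded index centralized by all such $\alpha$). Consequently there is a characteristic subgroup $A_0\le O_{p'}(G)$ of $m$-bounded index centralized by every $p$-element of $G$; replacing $G$ by $G/A_0$ we may assume $O_{p'}(G)$ itself is $m$-bounded.

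\medskip

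\emph{Step 2: Pass to $C_G(O_{p'}(G))$ and then to the quotient by $O_{p'}(G)$.} With $O_{p'}(G)$ now of bounded order, $C=C_G(O_{p'}(G))$ has bounded index in $G$; since $O_{p'}(C)\le O_{p'}(G)\cap C=Z(O_{p'}(G))$ is central in $C$ and a $p'$-group, $C=O_{p'}(C)\times E$ for some characteristic $p$-and-above part is not quite right, so instead I would argue that $F(C)=O_{p'}(C)\times O_p(C)$ with $O_{p'}(C)$ central, hence $C_C(F(C))\le F(C)$ forces the generalized Fitting subgroup of $\bar C=C/O_{p'}(C)$ to have $O_{p'}(\bar C)=1$. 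Now in $\bar C$ every $p$-element still has an Engel sink of size at most $m$ (sinks only shrink under quotients), and $O_{p'}(\bar C)=1$. The goal is to show $\bar C/O_p(\bar C)$ is a $p'$-group after cutting out a bounded piece.

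\medskip

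\emph{Step 3: Use \cite{ksijac} on the relevant sections, or argue directly on chief factors.} By the cited theorem applied to $\bar C$ — or better, by working chief factor by chief factor — the obstruction is a chief factor $V$ of $\bar C$ which is a $q$-group for $q\ne p$ on which some $p$-element $g$ acts nontrivially; as in Step 1, $|[V,g]|\le m$, so $V$ itself has $m$-bounded order unless the action is trivial, and since $O_{p'}(\bar C)=1$ such $V$ must sit above $O_p(\bar C)$, i.e.\ inside $\bar C/O_p(\bar C)$. A Clifford-theoretic / Hartley-type argument shows that if $p$-elements act on a $p'$-section with all commutator subgroups bounded, then modulo a bounded-order subgroup they act trivially, which means the corresponding section is a $p'$-group. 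Assembling: $\bar C$ has a normal subgroup of $m$-bounded index over $O_p(\bar C)$ which, together with $O_p$, gives the required $N$; pulling back through $C$ and then through $G$ (using that $[G:C]$ and $|O_{p'}(G)/A_0|$ were already bounded) yields a normal $N\trianglelefteq G$ with $[N:O_p(G)]$ $m$-bounded and $G/N$ a $p'$-group.

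\medskip

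\emph{Main obstacle.} The delicate point is Step 3: controlling the action of all $p$-elements simultaneously on a $p'$-section when only the \emph{individual} commutator subgroups $[V,g]$ are bounded, not the number of such $g$ nor their combined action. One cannot simply multiply: the subgroup generated by elements each with small fixed-point index need not have small fixed-point index. The fix must use that we are inside a finite group with $O_{p'}=1$, so a nontrivial $p'$-chief factor on which a Sylow $p$-subgroup acts nontrivially would, by coprime representation theory (e.g.\ a minimal such configuration and the fact that a faithful irreducible module for a $p$-group over $\mathbb{F}_q$ has dimension forcing $|[V,g]|$ large for a suitable $g$), contradict the bound $m$. Making this quantitative — extracting from "$|\mathscr{E}(g)|\le m$ for all $p$-elements $g$" a single $m$-bounded normal subgroup above which all $p$-elements act trivially on every $p'$-section — is the technical heart, and I expect it to rely on the machinery already developed in \cite{ksijac,ks_engel} for the full-group case, adapted to restrict attention to $p$-elements.
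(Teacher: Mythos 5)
Your skeleton (control the action of $p$-elements on $O_{p'}(G)$, then pass to quotients) matches the paper's, but the two steps you lean on are precisely the ones that require real work, and your justifications for both are incorrect or absent. First, the inclusion $[O_{p'}(G),g]\subseteq\mathscr{E}(g)$ holds only when $O_{p'}(G)$ is abelian (there $x\mapsto[x,g]$ is an automorphism of $[O_{p'}(G),g]$, so every element is a stable iterated commutator); the identity $[N,{}_lg]=[N,g]$ for subgroups under coprime action does not put the \emph{elements} of the subgroup $[N,g]$ into the minimal sink. Already for nilpotent $O_{p'}(G)$ one only gets that $|[O_{p'}(G),g]|$ is $m$-bounded, not $\leq m$ (that is \cite[Lemma 3.2]{ks_almost}, quoted as Lemma \ref{[N,g]_m-lim}), and for non-nilpotent $O_{p'}(G)$ this bound is a substantial result: the paper proves it (Proposition \ref{[N,P]_m-lim}) by induction on the Fitting height of $O_{p'}(G)$, which must itself first be bounded via Lemma \ref{h_lim} combined with \cite[Theorem 1.1]{ks_engel}. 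Second, your ``direct'' argument in Step 1 is false as stated: each $p$-element centralizes a subgroup of index at most $m$ in $O_{p'}(G)$, but the intersection of these centralizers over all $p$-elements can have unbounded index (many hyperplanes in a large elementary abelian group intersect trivially). The correct tool — and the answer to the obstacle you yourself flag in Step 3 — is Lemma \ref{[V,U]_m-lim}: a $p$-group $U$ of automorphisms of an abelian $p'$-group $V$ with $|[V,u]|\leq m$ for every $u\in U$ has $m$-bounded order. Bounding $|U|$ itself, rather than hunting for a common centralized subgroup, is what makes the ``simultaneous control'' problem go away; you identify this as the technical heart but do not supply it.

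Separately, your proposal never addresses the insoluble case. Step 3 argues chief factor by chief factor with $q$-groups, which implicitly assumes abelian chief factors; but $G$ need not be soluble, and a $p$-element may act on a semisimple normal section (a product of non-abelian simple groups), where coprime representation theory says nothing. The paper handles this with the Guralnick--Tracey theorem (the sink $\mathcal{E}_G(\alpha)$ generates $[G,\alpha]$ in the semisimple case) together with Hartley's Brauer--Fowler-type theorem to bound such sections (Lemmas \ref{G_semis_lim} and \ref{lema4}), and then an induction on the insoluble length $\lambda(G)$, itself bounded by Lemma \ref{lim_com_n_sol}. Without some substitute for this machinery, the reduction to the soluble case — and hence the theorem — is not reached.
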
 

Theorem \ref{main1} can be naturally extended to profinite groups $G$ in which any $p$-element has a finite Engel sink of cardinality at most $m$.
On the other hand, in the realm of profinite groups, it is more natural to handle the setting where the cardinalities of sinks are finite but not necessarily bounded.
 
In this paper we prove the following profinite variation of Theorem \ref{main1}.

\begin{theorem}\label{main2} Let $p$ be a prime and $G$ a profinite group in which every $p$-element possesses a finite Engel sink. Then $G$ has a  normal subgroup $N$ such that $N$ is virtually pro-$p$ while $G/N$ is a pro-$p'$ group.
\end{theorem}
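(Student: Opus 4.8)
The plan is to deduce Theorem~\ref{main2} from Theorem~\ref{main1} via the standard inverse-limit machinery for profinite groups, after first establishing a bound on the sinks inside suitable finite quotients. Since $G$ is profinite and each $p$-element $g$ has a \emph{finite} Engel sink $\mathscr{E}(g)$, one first argues that, on passing to an appropriate open normal subgroup, the relevant data become uniformly bounded. Concretely, I would consider the family of all continuous finite quotients $G/M$ with $M$ open normal in $G$. For a $p$-element $\bar g$ of $G/M$, a natural candidate for an Engel sink is the image of a sink of a lift of $\bar g$; one checks that images of Engel sinks are Engel sinks, so each $p$-element of $G/M$ has a finite Engel sink and hence $G/M$ falls under Theorem~\ref{main1} --- but with a bound $m(M)$ that a priori depends on $M$. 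The first key step is therefore to remove this dependence.

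To remove the dependence, I would invoke a compactness/K\H{o}nig-type argument: if no uniform bound existed, one could extract a sequence of $p$-elements in finite quotients with sinks of unbounded size, lift these coherently through the inverse system, and produce in $G$ itself either a $p$-element with infinite Engel sink (contradiction) or violate the finiteness hypothesis in the limit. Alternatively, and more cleanly, I would use the fact that a profinite group in which every $p$-element has a finite Engel sink has, for each fixed $p$-element $g$, the set $\mathscr{E}(g)$ closed (being finite), and the function $x \mapsto l(x,g)$ well-defined; a Baire-category or compactness argument on $G$ then yields a single open normal subgroup $N_0$ and an integer $m$ such that every $p$-element of every quotient $G/M$ with $M \leq N_0$ has a sink of size at most $m$. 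Granting this, Theorem~\ref{main1} applies uniformly: each such $G/M$ has a normal subgroup $\bar N_M$ with $(G/M)/\bar N_M$ a $p'$-group and $[\bar N_M : O_p(G/M)]$ bounded by a constant $f(m)$ independent of $M$.

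The final step assembles these quotient-level conclusions into a statement about $G$. Let $N_M$ be the preimage in $G$ of $\bar N_M$. The subgroups $N_M$ form a filtered family; set $N = \bigcap_M N_M$ (intersection over $M \leq N_0$), which is closed and normal in $G$. Then $G/N$ embeds into the inverse limit of the $p'$-groups $(G/M)/\bar N_M$, hence is a pro-$p'$ group. For the other half, one must show $N$ is virtually pro-$p$, i.e. has an open pro-$p$ subgroup: since each $N_M/M$ has $[\bar N_M : O_p(G/M)] \leq f(m)$, the pro-$p$ part is ``large'' uniformly, and passing to the limit the subgroup $P = \varprojlim O_p(G/M)$ (suitably interpreted inside $N$) is a pro-$p$ subgroup of $N$ of index at most $f(m)$, hence open. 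Some care is needed because $O_p$ does not behave perfectly under inverse limits --- one should instead work with $\bigcap_M$ of the preimages of $O_p(G/M)$ and bound its index using the uniform bound $f(m)$ together with the fact that an intersection of open subgroups of index $\le f(m)$ in a profinite group, while possibly not of bounded index, does become manageable once one notes the quotients stabilize; alternatively one fixes a single $M_0$ realizing the relevant structure and shows the preimage of $O_p(G/M_0)$ already works up to finite index.

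I expect the main obstacle to be precisely the uniform-boundedness step: transferring the hypothesis ``every $p$-element has a \emph{finite} sink'' (no common bound) in the profinite group to ``every $p$-element in every small enough finite quotient has a sink of size $\le m$'' (a common bound). This requires a genuine compactness argument rather than a formal manipulation, because a $p$-element of a finite quotient need not lift to a $p$-element of $G$ of comparable order --- one must work with pro-$p$ subgroups and their images, using that the closure of a $p$-element is a procyclic pro-$p$ group, and control how sinks of such subgroups project. Once the uniform bound is in hand, the descent to $G$ via inverse limits and the identification of $N$ are routine, modulo the mild subtlety about $O_p$ and inverse limits noted above.
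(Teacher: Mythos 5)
Your reduction to Theorem~\ref{main1} hinges entirely on the uniform-boundedness step, and that step is exactly where the proposal breaks down: you have identified the obstacle correctly but not overcome it. The hypothesis of Theorem~\ref{main2} deliberately provides no common bound $m$ (the paper even remarks that the bounded profinite case is a routine extension of Theorem~\ref{main1}, and that the point of Theorem~\ref{main2} is the unbounded setting), and none of the devices you gesture at produces one. A Baire-category argument would require the sets $\{g : |\mathscr{E}(g)|\le m\}$ to be closed in the set of $p$-elements, which is not clear (the minimal sink is not obviously semicontinuous in $g$); a K\"onig-type lifting argument fails in the other direction, because a coherent sequence of elements with large sinks in finite quotients need not assemble into a single $p$-element of $G$ with infinite sink --- the sink of an element controls the sinks of its images, not conversely. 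Worse, even if a uniform bound does hold under the hypothesis (which is plausible but far from obvious), proving it would require essentially the structural content of the theorem itself; analogous results in the almost-Engel literature handle the unbounded case by separate qualitative arguments rather than by first establishing a bound. So as written the proposal assumes the hardest part. (The assembly step you describe at the end is essentially fine modulo this: the preimages of $O_p(G/M)$ form a descending filtered family meeting $\langle P^G\rangle$ in subgroups of index at most $f(m)$, and such a family stabilizes.)

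For comparison, the paper's proof takes a completely different and much shorter route, and explicitly does not use Theorem~\ref{main1}. For each $p$-element $g$, the mere finiteness of $\mathscr{E}(g)$ yields an open normal subgroup $N_g$ with $N_g\cap\mathscr{E}(g)=1$; then $g$ acts as a left Engel element on $N_g$, so $g\in O_p(N_g\langle g\rangle)$ and $[N_g,g]\le O_p(G)$. Hence every $p$-element of $G/O_p(G)$ is centralized by an open subgroup, i.e.\ is an $FC$-element. The Acciarri--Shumyatsky theorem on profinite groups whose $p$-elements are all $FC$ (Theorem~\ref{as}) then shows that Sylow $p$-subgroups of $G/O_p(G)$ are finite, and Dicman's lemma gives that $N=\langle P^G\rangle$ has the required properties. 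No uniform bound on the sinks is ever needed. To salvage your approach you would have to prove the uniform bound as a separate, genuinely nontrivial lemma; the more economical observation is that finiteness of each individual sink already localizes the action of each $p$-element, which is what the paper exploits.
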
 

Recall that a profinite group is said to possess a property virtually if it has an open subgroup with that property.
Somewhat surprisingly, the proof of Theorem \ref{main2} given in this paper does not use Theorem \ref{main1} at all.

\section{Proof of Theorem \ref{main1}}
Throughout the paper, we say that a quantity is $(a,b,c\dots)$-bounded if it is bounded in terms of the parameters $a,b,c\dots$.
We start with a lemma which lists some well-known properties of coprime actions (see for example \cite[Ch.~5 and 6]{go}).  In the sequel the lemma will often be used without explicit references. As usual, $[G,A]$ denotes the subgroup generated by commutators $x^{-1}x^a$, where $x\in G$ and $a\in A$. The symbol $\langle X\rangle$ stands for the subgroup generated by the set $X$.

\begin{lemma}\label{cc}
Let  $A$ be a group of automorphisms of a finite group $G$ such that $(|A|,|G|)=1$. Then
\begin{enumerate}
\item[(i)] $G=[G,A]C_{G}(A)$. If $G$ is abelian, then $G=[G,A]\oplus C_{G}(A)$.
\item[(ii)] $[G,A,A]=[G,A]$. 
\item[(iii)] $C_{G/N}(A)=NC_G(A)/N$ for any $A$-invariant normal subgroup $N$ of $G$.
\item[(iv)] If $[G/\Phi(G),A]=1$, then $[G,A]=1$.
\end{enumerate}
\end{lemma}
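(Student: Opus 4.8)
The four parts are classical coprime-action facts, which the paper states with a reference to \cite[Ch.~5 and 6]{go} and no proof; the following is the skeleton of a self-contained argument. The plan is to bootstrap everything from (iii), and to reduce (iii) to the statement that $H^{1}(A,N)$ is trivial whenever $A$ acts on a finite group $N$ with $(|A|,|N|)=1$ --- concretely, that every map $\phi\colon A\to N$ satisfying $\phi(ba)=\phi(a)\phi(b)^{a}$ has the form $\phi(a)=m(m^{-1})^{a}$ for some $m\in N$.

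For abelian $N$ this is the familiar averaging argument: since $|A|$ is invertible on $N$, one checks, writing $N$ additively, that $m=|A|^{-1}\sum_{b\in A}\phi(b)$ works. The general case is the conjugacy part of the Schur--Zassenhaus theorem applied to the split extension $N\rtimes A$, since cocycles modulo coboundaries parametrise the $N$-conjugacy classes of complements to $N$; this is the one genuinely non-elementary ingredient and is where the coprimality is really used.

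Granting this, the remaining steps are short commutator computations. For (iii): if $gN\in C_{G/N}(A)$ then $a\mapsto g^{-1}g^{a}$ is a cocycle of the above type with values in the $A$-invariant normal subgroup $N$, so there is $m\in N$ with $gm\in C_{G}(A)$; hence $C_{G/N}(A)\subseteq NC_{G}(A)/N$, the reverse inclusion being obvious. For (i): $A$ acts trivially on $G/[G,A]$, so $C_{G/[G,A]}(A)=G/[G,A]$, and comparing this with (iii) for $N=[G,A]$ gives $G=[G,A]C_{G}(A)$; when $G$ is abelian the sum is direct because, with $\nu=\sum_{a\in A}a\in\Z[A]$, the subgroup $[G,A]$ is annihilated by $\nu$ while $\nu$ acts as multiplication by $|A|$ on $C_{G}(A)$, and $|A|$ is invertible on $G$. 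For (iv): the image of $[G,A]$ in $G/\Phi(G)$ equals $[G/\Phi(G),A]=1$, so $[G,A]\le\Phi(G)$, and then (i) gives $G=[G,A]C_{G}(A)=\Phi(G)C_{G}(A)$, whence $G=C_{G}(A)$ by the non-generator property of $\Phi(G)$, i.e.\ $[G,A]=1$. For (ii): by (i) every element of $G$ is a product $xy$ with $x\in[G,A]$ and $y\in C_{G}(A)$, so from $[xy,a]=[x,a]^{y}[y,a]$ and $[y,a]=1$ we see that $[G,A]$ is generated by the elements $[x,a]^{y}$ with $x\in[G,A]$, $y\in C_{G}(A)$, $a\in A$; since conjugation by $y\in C_{G}(A)$ is an automorphism of $G$ commuting with the action of $A$, it preserves both $[G,A]$ and $[[G,A],A]$, so $[x,a]^{y}=[x^{y},a]\in[[G,A],A]$, giving $[G,A]\le[[G,A],A]=[G,A,A]$ and hence equality.

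The main obstacle is thus entirely concentrated in the vanishing of $H^{1}(A,N)$ for non-abelian $N$ --- equivalently, Schur--Zassenhaus conjugacy for $N\rtimes A$ --- needed for (iii); once that is available, parts (i), (ii) and (iv) follow formally, which is exactly why it is cleanest in practice to cite \cite{go} for the whole lemma.
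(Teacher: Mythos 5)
Your proof is correct; the paper gives no proof of this lemma at all, simply citing Gorenstein, and your derivation---concentrating everything in the vanishing of $H^{1}(A,N)$, i.e.\ the conjugacy part of Schur--Zassenhaus, and then deducing (i), (ii), (iv) by short formal computations---is precisely the standard treatment found in that reference. The one point worth making explicit is that the conjugacy part of Schur--Zassenhaus requires one of $N$, $A$ to be soluble, which in the coprime setting is automatic by the Feit--Thompson theorem since one of the two has odd order.
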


According to \cite[Lemma 2.1]{ks_almost}, the Engel sink $\mathscr{E}(g)$ of an element $g\in G$ consists precisely of all elements $x$ such that $x = [x,g,g,\ldots,g]$, where $g$ occurs at least once. The following lemma is straightforward.

\begin{lemma}\label{E=[E,g]}
Let $g$ be an element of a finite group $G$ and write $E=\langle\mathscr{E}(g)\rangle$. Then $E = [E,g]$.
\end{lemma}

\begin{lemma}\label{N_Engel}
Let $g$ be an element of a finite group $G$, and let $N$ be a normal subgroup of $G$. Then $N\cap\mathscr{E}(g)=1$ if and only if there exists a positive integer $k$ such that $[N,{}_kg]=1$.
\end{lemma}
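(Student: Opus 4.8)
The plan is to prove the equivalence in Lemma~\ref{N_Engel} by establishing the two implications separately, using the description of the Engel sink recalled just before Lemma~\ref{E=[E,g]}: namely, $\mathscr{E}(g)$ consists exactly of those $x\in G$ that can be written as $x=[x,{}_jg]$ for some $j\ge 1$.

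First I would treat the implication ``$[N,{}_kg]=1$ for some $k$ $\implies$ $N\cap\mathscr{E}(g)=1$''. Suppose $x\in N\cap\mathscr{E}(g)$. By the description above, $x=[x,{}_jg]$ for some $j\ge 1$, and hence by iterating we get $x=[x,{}_{tj}g]$ for every $t\ge 1$. Since $N$ is normal, all the commutators $[x,{}_ig]$ lie in $N$, so in particular $x=[x,{}_{tj}g]\in[N,{}_{tj}g]$. Choosing $t$ large enough that $tj\ge k$ gives $x\in[N,{}_{tj}g]\subseteq[N,{}_kg]=1$, so $x=1$. This shows $N\cap\mathscr{E}(g)=1$.

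For the converse, assume $N\cap\mathscr{E}(g)=1$. The key point is that the descending chain of subgroups $N\supseteq[N,g]\supseteq[N,g,g]\supseteq\cdots$ (each term $g$-invariant since $N$ is normal, hence $[N,{}_ig]$ is normalised by $\langle g\rangle$) stabilises in the finite group $G$: there is some $r$ with $[N,{}_rg]=[N,{}_{r+1}g]=:M$. Then $[M,g]=M$. I claim $M\subseteq\langle\mathscr{E}(g)\rangle$, or more directly that every element of $M$ lies in $\mathscr{E}(g)$; the cleanest route is to show $M=1$. Note that for $x\in M$, since $[M,g]=M$, one can pull $x$ back: $x=[y,g]$ for some $y\in M$, and iterating, $x=[z,{}_ig]$ for $z\in M$ and arbitrarily large $i$. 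Because $M$ is finite, along the sequence $x, \text{(a preimage under }[-,g]), \dots$ some element repeats, which yields an element $w\in M$ with $w=[w,{}_jg]$ for some $j\ge 1$, i.e. $w\in\mathscr{E}(g)$. But $w\in M\subseteq N$, so $w\in N\cap\mathscr{E}(g)=1$; tracing back, this forces (using that $[-,g]$ restricted appropriately is ``eventually injective'' on the stable part, or simply that $M=[M,g]$ together with finiteness and the no-nontrivial-fixed-cycle conclusion) $M=1$. Hence $[N,{}_rg]=1$, and we may take $k=r$.

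The main obstacle I anticipate is making the last step fully rigorous: from ``$M=[M,g]$ and $M\cap\mathscr{E}(g)=1$'' to ``$M=1$''. The subtlety is that $[-,g]\colon M\to M$ need not be injective, so one must argue that if $M\ne 1$ then the map $x\mapsto[x,g]$, being surjective on the finite set $M$ (as $[M,g]=M$), is in fact a bijection of $M$; pick any $1\ne x_0\in M$ and let $x_{i+1}$ be the unique preimage with $[x_{i+1},g]=x_i$; by finiteness the sequence $(x_i)$ is eventually periodic, and bijectivity forces it to be purely periodic, giving $x_0=[x_0,{}_jg]$ for the period length $j$, whence $x_0\in\mathscr{E}(g)\cap M=1$, a contradiction. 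So the real content is the bijectivity of $[-,g]$ on the stabilised subgroup $M$, which follows from surjectivity on a finite set; everything else is routine. I would present this carefully and keep the rest brief.
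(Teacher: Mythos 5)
Your first implication ($[N,{}_kg]=1\Rightarrow N\cap\mathscr{E}(g)=1$) is correct and is essentially the paper's argument with the details filled in. The converse, however, has a genuine gap at exactly the point you flag as ``the real content'': you claim that the map $\theta\colon x\mapsto[x,g]$ is surjective on the stabilised term $M=[N,{}_rg]$ because $[M,g]=M$ and $M$ is finite. But $[M,g]$ denotes the \emph{subgroup generated by} the commutators $[x,g]$ with $x\in M$, not the set of those commutators; the image of $\theta$ has cardinality $[M:C_M(g)]$, so $\theta$ is surjective precisely when $C_M(g)=1$, and $[M,g]=M$ does not force this. For instance, if $M=Q_8$ and $g$ is an automorphism of order $3$ cycling $i\mapsto j\mapsto k\mapsto i$, then $[M,g]=M$ while $C_M(g)=\{\pm1\}\neq1$ and the set $\{[x,g]:x\in M\}$ has only four elements. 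So the bijectivity on which your periodic-orbit argument rests is not available, and the step ``$M=[M,g]$ together with $M\cap\mathscr{E}(g)=1$ implies $M=1$'' remains unproved. (This is the classical subtlety with Engel conditions: element-wise vanishing of long commutators does not formally pass to the generated subgroups $[N,{}_ig]$.)

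The standard repair --- and what the paper's terse ``it follows'' is really invoking --- goes through Baer's theorem rather than the commutator map. From $N\cap\mathscr{E}(g)=1$ one gets, for every $x\in N$, that all sufficiently long commutators $[x,{}_lg]$ lie in $N\cap\mathscr{E}(g)=1$ (they lie in $N$ by normality and in $\mathscr{E}(g)$ by the defining property of a sink); since $[ng^i,g]\in N$ for any $n\in N$, this makes $g$ a left Engel element of $N\langle g\rangle$. By Baer's theorem \cite[12.3.7]{robinson} (used again in the proof of Lemma~\ref{corr}), $g\in F(N\langle g\rangle)$, so the normal closure $H=\langle g\rangle[N,g]$ is nilpotent of some class $c$; an easy induction gives $[N,{}_{i+1}g]\leq\gamma_{i+1}(H)$, whence $[N,{}_{c+1}g]=1$. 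With this substitution your overall structure goes through, but some such nilpotency input is genuinely needed and cannot be replaced by the surjectivity claim.
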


\begin{proof}
Suppose that $N \cap \mathscr{E}(g) = 1$. If an element $x\in N$ can be written as $x = [x,{}_ig]$ for some $i\geq1$, then clearly $x=1$. It follows that there is a positive integer $k$ such that $[N,{}_kg]=1$.
	
On the other hand, if there exists a positive integer $k$ such that $[N,{}_kg]=1$, then no nontrivial element $x\in N$ satisfies an equality $x = [x,{}_ig]$. Therefore we conclude that $N \cap \mathscr{E}(g) = 1$.
\end{proof}

Throughout, we write $F(G)$ to denote the Fitting subgroup of $G$.

\begin{lemma}\label{corr} For any element $g$ of  a finite group $G$ there is a unique maximal normal subgroup $N$ of $G$ such that $N\cap\mathscr{E}(g)=1$. We have $[N,g]\leq F(G)$.
\end{lemma}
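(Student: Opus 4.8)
The plan is to first establish the existence and uniqueness of a maximal normal subgroup $N$ with $N\cap\mathscr{E}(g)=1$, and then show $[N,g]\le F(G)$ by applying Baer's theorem after passing to a suitable quotient.

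For the first part, I would argue that the family of normal subgroups $M\trianglelefteq G$ with $M\cap\mathscr{E}(g)=1$ is closed under products. So let $M_1,M_2$ be two such subgroups; by Lemma \ref{N_Engel} there are positive integers $k_1,k_2$ with $[M_1,{}_{k_1}g]=1$ and $[M_2,{}_{k_2}g]=1$. Setting $k=\max\{k_1,k_2\}$, one checks that $[M_1M_2,{}_{k}g]\le[M_1,{}_kg][M_2,{}_kg]=1$, using that $M_1,M_2$ are normal and that the commutator map is compatible with products in this left-normed sense. Hence $M_1M_2$ again satisfies $(M_1M_2)\cap\mathscr{E}(g)=1$ by Lemma \ref{N_Engel}. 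Since $G$ is finite, the product $N$ of all normal subgroups meeting $\mathscr{E}(g)$ trivially is therefore itself such a subgroup, and it is the unique maximal one.

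For the inclusion $[N,g]\le F(G)$, the idea is to show that the image $\bar g$ of $g$ in $\bar G=G/F(G)$ acts as a left Engel element on $\bar N=NF(G)/F(G)$, and then invoke Baer's theorem. Concretely, since $[N,{}_kg]=1$ for some $k$ by Lemma \ref{N_Engel}, we get $[\bar N,{}_k\bar g]=1$, so for every $\bar x\in\bar N$ the sequence of commutators $[\bar x,{}_i\bar g]$ eventually vanishes; thus $\bar g$ is a left Engel element of the subgroup $\langle\bar N,\bar g\rangle$. By Baer's theorem \cite[12.3.7]{robinson}, $\bar g$ lies in the Fitting subgroup of $\langle\bar N,\bar g\rangle$. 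The point is then to deduce that $[\bar N,\bar g]$ is nilpotent (it is normal in $\langle\bar N,\bar g\rangle$ and generated by the normal closure of $[\bar N,\bar g]$ inside a group in which $\bar g$ is Fitting), hence $[\bar N,\bar g]\le F(\bar G)$; but $F(\bar G)=F(G/F(G))=1$ since the Fitting subgroup of $G/F(G)$ is trivial. Therefore $[\bar N,\bar g]=1$, which means exactly $[N,g]\le F(G)$.

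The main obstacle is the middle step of the second part: passing from "$\bar g$ is a left Engel element of $\langle\bar N,\bar g\rangle$'' to "$[\bar N,\bar g]=1$''. One must be careful that Baer's theorem places $\bar g$ in the Fitting subgroup of the (possibly proper) subgroup $\langle\bar N,\bar g\rangle$ rather than of $\bar G$ itself, and then argue that a subnormal nilpotent subgroup argument, together with the triviality of $F(G/F(G))$, forces the commutator to be trivial. An alternative and perhaps cleaner route is to observe directly that $[N,{}_kg]=1$ forces $[N,g]$ to be nilpotent: indeed $[N,g]$ is normalized by $g$, the lower central series of $\langle [N,g],g\rangle$ in the variable $g$ terminates, and $[N,g]\trianglelefteq NF(G)$-type considerations show $[N,g]$ is itself nilpotent as a normal subgroup of $G$ on which $g$ acts nilpotently; being a nilpotent normal subgroup of $G$ it lies in $F(G)$ by definition of the Fitting subgroup. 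This avoids quotients entirely and is likely the argument the authors intend.
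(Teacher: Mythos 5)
Your overall plan matches the paper's (products of normal subgroups meeting $\mathscr{E}(g)$ trivially, then Baer's theorem), but both halves contain genuine errors. In the first half, the inequality $[M_1M_2,{}_{k}g]\le[M_1,{}_kg][M_2,{}_kg]$ with $k=\max\{k_1,k_2\}$ is not justified and does not follow from normality of $M_1,M_2$: already at the first step, $[m_1m_2,g]=[m_1,g]^{m_2}[m_2,g]$, and the subgroup $[M_1,g]$ is normalized by $M_1$ and by $g$ but not, in general, by $M_2$, so $[m_1,g]^{m_2}$ need not lie in $[M_1,g]$ and the right-hand side of your containment need not even be a subgroup. Fortunately Lemma \ref{N_Engel} only requires \emph{some} $k$, and the correct route is to work modulo $M_1$ first: $[M_2,{}_{k_2}g]=1$ gives $[M_1M_2,{}_{k_2}g]\le M_1$, whence $[M_1M_2,{}_{k_1+k_2}g]=1$. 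This is exactly what the paper does; the appearance of $k_1+k_2$ rather than $\max\{k_1,k_2\}$ is the same phenomenon as in Fitting's theorem, where the class of a product of two normal nilpotent subgroups is bounded by the sum, not the maximum, of the classes.

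In the second half, your main route rests on the assertion that $F(G/F(G))=1$, which is false: for $G=S_4$ one has $F(G)=V_4$ and $F(G/F(G))\cong A_3\ne 1$; if the assertion were true, every finite soluble group would be nilpotent. Even granting $[\bar N,\bar g]\le F(\bar G)$, pulling back only yields $[N,g]\le F_2(G)$, so no argument that quotients by $F(G)$ can reach the stated conclusion $[N,g]\le F(G)$. Your ``alternative route'' is closer to the intended proof but also misstates a key point: $[N,g]$ is \emph{not} a normal subgroup of $G$ in general; it is normal in $N\langle g\rangle$ (in particular in $N$), hence subnormal in $G$, and it is the fact that a nilpotent subnormal subgroup lies in $F(G)$ that one needs. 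Moreover, the nilpotency of $[N,g]$ is not immediate from $[N,{}_kg]=1$; the clean derivation (the paper's) is that $g$ is a left Engel element of $N\langle g\rangle$, so Baer's theorem applied inside $N\langle g\rangle$ gives $g\in F(N\langle g\rangle)$, whence $[N,g]\le F(N\langle g\rangle)$ is nilpotent, and then subnormality finishes the argument.
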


\begin{proof}  To prove the uniqueness of $N$ it is sufficient to show that if $N_1$ and $N_2$ are normal subgroups of $G$ such that $N_1\cap\mathscr{E}(g)=1=N_2\cap\mathscr{E}(g)$, then $N_1N_2\cap\mathscr{E}(g)=1$.

Since $N_1 \cap \mathscr{E}(g) = 1 = N_2 \cap \mathscr{E}(g)$, by Lemma \ref{N_Engel} there exist $k_1, k_2$ such that $[N_1,{}_{k_1}g]=1=[N_2,{}_{k_2}g]$. It follows that $[N_1N_2,{}_{k_2}g]\leq N_1$ and therefore $[N_1N_2,{}_{k_1+k_2}g]=1$. Because of Lemma \ref{N_Engel}, this implies that $N_1 N_2 \cap \mathscr{E}(g)=1$.

To see that $[N,g]\leq F(G)$, observe that $g$ is a left Engel element of $N\langle g\rangle$. By \cite[Satz III.6.15]{baer_theo}, we have $g\in F(N\langle g\rangle)$. Therefore, $[N,g]$ is nilpotent and so $[N,g]\leq F(G)$.
\end{proof}

Recall that the Fitting series of a finite soluble group $G$ is defined starting with $F_0(G) = 1$, and then inductively, $F_{k+1}(G)$ is the full preimage of $F(G/F_k(G))$. The smallest number $h=h(G)$ such that $F_h(G) = G$ is called the Fitting height of $G$. 

\begin{lemma}\label{h_lim} Let $G$ be a finite soluble group and $g\in G$ such that $|\mathscr{E}(g)|\leq m$. Then $h(\langle \mathscr{E}(g)\rangle)\leq2m-2$.
\end{lemma}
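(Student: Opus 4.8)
My plan is to reduce to the case $G=\langle\mathscr{E}(g)\rangle\langle g\rangle$ and then argue by induction on $|G|$. Put $E=\langle\mathscr{E}(g)\rangle$. By \cite[Lemma 2.1]{ks_almost}, $\mathscr{E}(g)$ is the set of $x$ with $x=[x,{}_ig]$ for some $i\ge1$, a condition not referring to the ambient group; since moreover every commutator $[x,{}_ig]$ with $x\in E\langle g\rangle$ lies in $E\langle g\rangle$, the minimal sink of $g$ computed in $E\langle g\rangle$ is again $\mathscr{E}(g)$, so I may assume $G=E\langle g\rangle$. Then $E\trianglelefteq G$, and $E=[E,g]$ by Lemma~\ref{E=[E,g]}, hence $[E,{}_jg]=E$ for all $j\ge1$. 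Two facts are then used repeatedly. (a) $\langle g\rangle$ permutes $\mathscr{E}(g)$ by conjugation and fixes no point of $\mathscr{E}(g)\setminus\{1\}$: if $a\ne1$ and $a^g=a$ then $[a,g]=1$, so the forward orbit of $a$ under $x\mapsto[x,g]$ reaches $1$ and never returns to $a$, contradicting $a\in\mathscr{E}(g)$; hence every nontrivial $\langle g\rangle$-orbit on $\mathscr{E}(g)$ has length $\ge2$, and in particular $m\ne2$. (b) For every $G$-invariant $K\le E$ one has $\mathscr{E}(gK)=\overline{\mathscr{E}(g)}$, where bars denote images in $G/K$ (for "$\subseteq$": if $\bar x=[\bar x,{}_i\bar g]$ and $x$ is a preimage, then $[x,{}_{ji}g]\equiv x\pmod K$ for all $j$, and for $j$ large this element lies in $\mathscr{E}(g)$; for "$\supseteq$": $a=[a,{}_ig]$ gives $\bar a=[\bar a,{}_i\bar g]$). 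Consequently $\langle\mathscr{E}(gK)\rangle=\overline{\langle\mathscr{E}(g)\rangle}=E/K$ and $|\mathscr{E}(gK)|\le m$; and if $\mathscr{E}(g)\cap K\ne1$ then the $\bar1$-fibre of $\mathscr{E}(g)\to\mathscr{E}(gK)$, which is $\mathscr{E}(g)\cap K$, contains $1$ together with a nontrivial $\langle g\rangle$-orbit, so has at least $3$ elements, whence $|\mathscr{E}(gK)|\le m-2$.

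Now induct on $|G|$. If $m=1$ then $E=1$; if $m=3$, fact (a) together with the fact that $x\mapsto[x,g]$ permutes $\mathscr{E}(g)$ forces $E$ to be cyclic of order $3$, so $h(E)=1$; and $m=2$ does not occur. So assume $m\ge4$ and that the statement holds for all groups of smaller order. Let $s$ be the largest index with $\mathscr{E}(g)\cap F_s(E)=1$, equivalently (Lemma~\ref{N_Engel}) with $g$ acting nilpotently on $F_s(E)$; then $0\le s\le h(E)-1$, since $\mathscr{E}(g)\subseteq E=F_{h(E)}(E)$ and $m>1$. By maximality $\mathscr{E}(g)\cap F_{s+1}(E)\ne1$, so by fact (b) we have $|\mathscr{E}(gF_{s+1}(E))|\le m-2$ and $\langle\mathscr{E}(gF_{s+1}(E))\rangle=E/F_{s+1}(E)$; the inductive hypothesis applied in $G/F_{s+1}(E)$ gives $h(E/F_{s+1}(E))\le 2(m-2)-2=2m-6$, and therefore
\[
h(E)\ \le\ h\bigl(E/F_{s+1}(E)\bigr)+h\bigl(F_{s+1}(E)\bigr)\ \le\ (2m-6)+(s+1)\ =\ 2m-5+s .
\]
In particular the assertion follows whenever $s\le3$.

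There remains the case $s\ge4$, where $g$ acts as a bounded left Engel operator on $F_4(E)$ --- so, by Lemma~\ref{corr}, $F_4(E)$ lies in the largest normal subgroup of $G$ on which $g$ is Engel --- while the crude inequality above only yields $h(E)\le2m-5+s$. I expect this to be the main obstacle. The idea I would pursue is to set this Engel behaviour of $g$ on the bottom of the Fitting series against the relation $[E,{}_jg]=E$: passing to $G/F_4(E)$ and iterating, either at some stage the sink strictly drops (by at least two), which, fed into $h(E)=h\bigl(E/F_{4t}(E)\bigr)+4t$ and the inductive hypothesis, closes the bound; or $g$ acts nilpotently on $F_{4t}(E)$ for arbitrarily large $t$, hence on all of $E$, forcing $E=[E,{}_kg]=1$, a contradiction. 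Making the recursion terminate with the correct arithmetic is the delicate point; I would organise it by a simultaneous induction on $(|G|,h(E))$ while carefully tracking the sink sizes $|\mathscr{E}(gF_{4t}(E))|$ along the tower, and, if that bookkeeping does not suffice, by invoking a Hall--Higman-type estimate for the action of $g$ on the two-step sections $F_{i+1}(E)/F_{i-1}(E)$.
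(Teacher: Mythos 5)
Your reduction to $G=E\langle g\rangle$, the description of how sinks behave in quotients, and the observation that the sink drops by at least $2$ when the kernel meets it nontrivially are all correct (and the last point is in fact sharper than what the paper uses). But the proof is incomplete, and you say so yourself: the argument only closes when $s\le 3$, and the plan sketched for $s\ge 4$ is a programme ("I would pursue\dots", "if that bookkeeping does not suffice\dots"), not a proof. So there is a genuine gap, and it sits exactly where you predicted.

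The missing ingredient is the second assertion of Lemma~\ref{corr}, which you cite only in passing and never exploit. Let $N$ be the maximal normal subgroup of $G=E\langle g\rangle$ with $N\cap\mathscr{E}(g)=1$. By Lemma~\ref{N_Engel}, $g$ is a left Engel element of $N\langle g\rangle$, so Baer's theorem gives $[N,g]\le F(G)$; since $E=[E,g]$ (Lemma~\ref{E=[E,g]}) and $G=E\langle g\rangle$, it follows that the image of $N$ in $G/F(G)$ is \emph{central}, i.e.\ $N\le F_2(G)$. This caps the Fitting height of the sink-avoiding part at $2$, independently of $m$, and it dissolves your problematic case: in your notation $F_s(E)\le N\le F_2(G)$, so $F_s(E)\le F_2(E)$, and since the Fitting series of a soluble group is strictly increasing until it terminates, either $s\le 2$ or $F_2(E)=E$ and there is nothing to prove. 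The paper then runs the induction on $m$ rather than on $|G|$: take a minimal normal subgroup $M/N$ of $G/N$; it meets the image of the sink, so $|\mathscr{E}(gM/M)|\le m-1$ and by induction $h(EM/M)\le 2m-4$; since $M/N$ is abelian and $N$ is central modulo $F(G)$, one gets $M\le F_2(G)$, whence $h(E)\le (2m-4)+2=2m-2$. Your estimate $h(E)\le h(E/F_{s+1}(E))+h(F_{s+1}(E))$ would work the same way once you replace $F_{s+1}(E)$ by a subgroup whose height you can actually control, namely $M$ as above. Without Baer's theorem (or some equivalent) there is no reason the recursion you describe terminates with the right arithmetic, so the appeal to a Hall--Higman-type estimate is not a safe fallback either.
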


\begin{proof} We use induction on $m$. If $m=1$, the claim is correct so we assume that $m\geq2$. Set $E=\langle \mathscr{E}(g)\rangle$. Since $\mathscr{E}(g)$ consists precisely of all elements $x$ such that $x = [x,g,g,\ldots,g]$, where $g$ occurs at least once, without loss of generality we can assume that $G= E\langle g\rangle$. Let $N$ be the maximal normal subgroup of $G$ such that $N\cap\mathscr{E}(g)=1$. In view of Lemma \ref{corr} $[N,g]\leq F(G)$. Set $\overline{G}=G/F(G)$. It follows that $[\overline{N},\overline{g}]=1$. Since $E=[E,g]$, we conclude that $\overline{N}\leq Z(\overline{G})$. 
	
Observe that in $G/N$ any nontrivial normal subgroup has nontrivial intersection with $\mathscr{E}(gN/N)$. Let $M/N$ be a minimal normal subgroup of $G/N$. Then $|\mathscr{E}(gM/M)|\leq m-1$. 
	
By induction, $h(EM/M)\leq 2m-4$. Since $M/N$ is elementary abelian and $\overline{N}\leq Z(\overline{G})$, it follows that $M\leq F_2(G)$. Therefore $h(E)\leq2m-2$.\end{proof}

We will require the following two lemmas, which are immediate from  \cite[Lemma 3.2]{ks_almost} and \cite[Lemma 3.3]{ks_almost}, respectively.

\begin{lemma}\label{[N,g]_m-lim} Let $p$ be a prime and $N$ a nilpotent normal $p'$-subgroup of a finite group $G$. Then, for any $p$-element $g\in G$, the order of $[N,g]$ is bounded in terms of the size of the Engel sink $\mathscr{E}(g)$.
\end{lemma}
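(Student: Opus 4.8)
Set $m=|\mathscr E(g)|$. Since membership of an element in the minimal Engel sink depends only on that element and on $g$ \cite[Lemma 2.1]{ks_almost}, passing from $G$ to the subgroup $N\langle g\rangle$ does not enlarge $\mathscr E(g)$, so I may assume $G=N\langle g\rangle$. Put $M=[N,g]$. Then $M$ is a normal nilpotent $p'$-subgroup of $G$ on which $g$ acts coprimely, and $[M,g]=[N,g,g]=[N,g]=M$ by Lemma \ref{cc}(ii); it suffices to bound $|M|$ in terms of $m$. The plan is to bound, in turn, the minimal number of generators, the nilpotency class, and the exponent of $M$, and then to combine these.

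First note the abelian case: if $A$ is a finite abelian group on which $g$ acts with $[A,g]=A$, then the endomorphism $a\mapsto[a,g]=a^{-1}a^{g}$ of $A$ has image $[A,g]=A$, hence is bijective, hence of finite order $t$, so $[a,{}_{t}g]=a$ for every $a\in A$ and $A$ lies in the minimal sink of $g$. Applying this to $A=M/M'$ (on which $[\,\cdot\,,g]$ is onto because $[M,g]=M$) gives $|M/M'|\le|\mathscr E(gM')|\le m$, where one uses that the image of $\mathscr E(g)$ in $G/M'$ is an Engel sink of $gM'$. In particular $|M/\Phi(M)|\le m$, so $M$ is generated by at most $\log_{2}m$ elements, and $\exp(M/M')\le m$.

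To bound the class $c$ of $M$ I would pass to the associated graded Lie ring $L=\bigoplus_{i\ge1}L_{i}$ with $L_{i}=\gamma_{i}(M)/\gamma_{i+1}(M)$, on which $g$ acts by graded automorphisms and which satisfies $L_{i+1}=[L_{i},L_{1}]$; from $[M,g]=M$ the operator $g-1$ is invertible on $L_{1}$. Two facts drive the argument. (a) If $g$ acts trivially on $L_{i}$, then for $a\in L_{1}$ and $b\in L_{i}$ one has $(g-1)[a,b]=[a^{g},b^{g}]-[a,b]=[(g-1)a,b]$; since $g-1$ is onto $L_{1}$ this yields $(g-1)L_{i+1}=L_{i+1}$, i.e.\ $g$ acts on $L_{i+1}$ without nontrivial fixed points. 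Hence no two consecutive layers $L_{i},L_{i+1}$ can both carry a trivial $g$-action. (b) Writing $E_{i}=\mathscr E(g)\cap\gamma_{i}(M)$, the long commutators $[x,{}_{k}g]$ with $x\in\gamma_{i}(M)$ eventually lie in $E_{i}$ and generate $[\gamma_{i}(M),{}_{k}g]=[\gamma_{i}(M),g]$ by Lemma \ref{cc}(ii), while conversely each element of $E_{i}$ has the form $[\,\cdot\,,{}_{j}g]$ and hence lies in $[\gamma_{i}(M),g]$; thus $\langle E_{i}\rangle=[\gamma_{i}(M),g]$. Consequently, if $g$ acts nontrivially on $L_{i}$ then $[\gamma_{i}(M),g]\not\le\gamma_{i+1}(M)$, so $E_{i}$ contains an element outside $\gamma_{i+1}(M)$, whence $E_{i}\supsetneq E_{i+1}$. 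The chain $E_{1}\supseteq E_{2}\supseteq\cdots\supseteq E_{c+1}=\{1\}$ with $|E_{1}|\le m$ has at most $m-1$ strict steps, so at most $m-1$ layers carry a nontrivial action; combined with (a) this forces $c\le 2m-1$.

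Finally, each $L_{i}$ is a homomorphic image of $L_{1}^{\otimes i}$, so $\exp(L_{i})$ divides $\exp(L_{1})\le|M/M'|\le m$; climbing the lower central series, $\exp(M)$ divides $\prod_{i\le c}\exp(L_{i})$, so $\exp(M)\le m^{2m-1}$. Now $M$ is a nilpotent group generated by at most $\log_{2}m$ elements, of class at most $2m-1$ and exponent at most $m^{2m-1}$; each layer $\gamma_{i}(M)/\gamma_{i+1}(M)$ is then generated as an abelian group by a bounded number of elements of bounded order, so $|M|=\prod_{i\le c}|\gamma_{i}(M)/\gamma_{i+1}(M)|$ is bounded in terms of $m$, and hence so is $|[N,g]|=|M|$. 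The step I expect to be the crux is the class bound: one has to prevent the $g$-fixed parts of the lower central layers from accumulating, and it is precisely the tension between the rigidity of the Lie bracket under the fixed-point-free action on $L_{1}$ and the finiteness of $\mathscr E(g)$ (which controls the layers that are acted upon nontrivially) that makes this work.
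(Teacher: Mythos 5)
The paper itself does not prove this lemma; it imports it wholesale from \cite[Lemma 3.2]{ks_almost}. Your argument is a genuine self-contained proof, and its skeleton is sound: reduce to $M=[N,g]=[M,g]$; use the surjective endomorphism $a\mapsto[a,g]$ of $M/M'$ to show that all of $M/M'$ lies in the minimal sink of the image of $g$, so $|M/M'|\le m$, which bounds both the number of generators of $M$ and $\exp(L_1)$; bound the nilpotency class by playing the fixed-point-free propagation in the associated Lie ring (your step (a), which is correct) against the descending chain $E_i=\mathscr{E}(g)\cap\gamma_i(M)$ of subsets of a set of size $m$; then combine rank, class and exponent. This is in the same spirit as the Khukhro--Shumyatsky argument but independent of it, and the abelian observation (that $[A,g]=A$ forces $A\subseteq\mathscr{E}(g)$) is exactly the right engine.

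One step is not justified as written. In (b) you assert that the elements $[x,{}_kg]$ with $x\in\gamma_i(M)$ generate $[\gamma_i(M),{}_kg]$ ``by Lemma \ref{cc}(ii)''. That lemma gives the equality of \emph{subgroups} $[H,g,g]=[H,g]$; but $[H,g,g]$ is by definition generated by $[y,g]$ with $y$ ranging over all of $[H,g]$, products of commutators included, not by the iterated commutators $[x,{}_2g]$ alone, so the generation claim does not follow from it. The equality $\langle E_i\rangle=[\gamma_i(M),g]$ that you need is nevertheless true and can be recovered from your own abelian observation: put $W=[\gamma_i(M),g]$, so $[W,g]=W$ by coprimality; the abelian case applied to $W/W'$ gives $W=\langle\mathscr{E}(g)\cap W\rangle W'$, and since $W'\le\Phi(W)$ the Frattini argument yields $W=\langle\mathscr{E}(g)\cap W\rangle\le\langle E_i\rangle$. (Alternatively, argue directly on $L_i$: if $g$ acts nontrivially, then $g-1$ is bijective on the nonzero summand $[L_i,g]$ of the coprime decomposition, so every element of $[L_i,g]$ equals some $[a,{}_t\bar g]$ and hence lies in the sink of $\bar g$ modulo $\gamma_{i+1}(M)$, which lifts to an element of $E_i\setminus\gamma_{i+1}(M)$.) With this patch the proof is complete.
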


\begin{lemma}\label{[V,U]_m-lim} Let $V$ be an abelian $p'$-group and $U$ a $p$-group of automorphisms of $V$. If $|[V,u]|\leq m$ for every $u\in U$, then the orders of $[V,U]$ and $U$ are both $m$-bounded.
\end{lemma}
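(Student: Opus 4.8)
The plan is to reduce to the case where $V$ is a $q$-group for a single prime $q\ne p$, then to bound $|U|$ (the crux), and to deduce the bound on $|[V,U]|$ afterwards.

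Decompose $V=\bigoplus_q V_q$ into its primary components, each of which is $U$-invariant. If $U$ acts nontrivially on some $V_q$, pick $u\in U$ with $[V_q,u]\ne 1$: this is a nontrivial $q$-group of order $\le m$, so $q\le m$; hence $U$ acts trivially on all but at most $\pi(m)$ of the $V_q$'s. Since $U$ embeds into $\prod_q U/C_U(V_q)$ and $[V,U]=\bigoplus_q[V_q,U]$, it is enough to bound $|V_q|$ and $|U/C_U(V_q)|$ for each such $q$. So I may assume $V$ is an abelian $q$-group with $q\ne p$ and $q\le m$, and that $U$ acts faithfully on $V$; put $s=\lfloor\log_q m\rfloor\le\log_2 m$, so that $\dim_{\F_q}[W,u]\le s$ for every $\F_q$-section $W$ of $V$ and every $u\in U$. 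The goal is now to bound $|V|$ and $|U|$ in terms of $m$.

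I would next observe that the bound on $|[V,U]|$ follows from one on $|U|$. From $[v,xy]=[v,x][v^x,y]$ and the commutativity of $V$ one gets $[V,xy]\le[V,x]+[V,y]$ and $[V,x^{-1}]=[V,x]$; so if $u_1,\dots,u_d$ generate $U$ then $[V,U]=[V,u_1]+\dots+[V,u_d]$, whence $|[V,U]|\le\prod_i|[V,u_i]|\le m^{d}$ with $d=d(U)\le\log_2|U|$. Thus everything comes down to bounding $|U|$, and this is the step I expect to be the main obstacle: the obvious strategies — for instance, peeling off $[V,\Omega_1(Z(U))]$ and inducting on $|U|$ — multiply the constant at each of the (a priori unboundedly many) steps, and so never close.

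To bound $|U|$ I would pass to a maximal abelian normal subgroup $A\trianglelefteq U$; by a standard fact about $p$-groups $C_U(A)=A$, so $U/A$ embeds into $\mathrm{Aut}(A)$ and it suffices to bound $|A|$ — for which I would bound $\exp(A)$ and $\mathrm{rank}(A)=\mathrm{rank}(\Omega_1(A))$ separately. For $\exp(A)$: given $a\in A$, coprimality yields $V=[V,a]\oplus C_V(a)$ with $a$ trivial on the second summand, so the restriction of $a$ to $[V,a]$ has the same order as $a$ and lies in $\mathrm{Aut}([V,a])$, a group of $m$-bounded order; hence $\exp(A)$ is $m$-bounded. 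For $\mathrm{rank}(A)$: set $E=\Omega_1(A)$ and $\bar V=V/\Phi(V)$. By Lemma~\ref{cc}(iv), applied to cyclic subgroups of $E$, the action of $E$ on $\bar V$ is faithful, and $\dim_{\F_q}[\bar V,e]\le s$ for all $e\in E$. Extending scalars to $\overline{\F}_q$ and decomposing $\bar V\otimes\overline{\F}_q$ into $E$-eigenspaces (legitimate because $q\ne p$), let $S_0$ be the set of nontrivial characters that occur; faithfulness forces $S_0$ to generate $\widehat{E}$, so $\mathrm{rank}(E)\le|S_0|$, while counting pairs $(e,\chi)$ with $\chi(e)=1$ produces some $e\in E\setminus\{1\}$ for which $\chi(e)\ne1$ for more than $|S_0|/2$ of the $\chi\in S_0$; as each such $\chi$ contributes to $[\bar V,e]$ this gives $|S_0|/2\le\dim_{\F_q}[\bar V,e]\le s$. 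Hence $\mathrm{rank}(A)\le2s$, so $|A|\le\exp(A)^{\mathrm{rank}(A)}$ is $m$-bounded; therefore $|U/A|\le|\mathrm{Aut}(A)|$, $|U|$, and finally $|[V,U]|\le m^{d(U)}$ are all $m$-bounded. The hypothesis $p\ne q$, automatic here since $V$ is a $p'$-group, is genuinely needed: the analogous statement with $p=q$ is false, as the group of all transvections with a common axis acting on a vector space over $\F_p$ shows.
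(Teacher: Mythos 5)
Your proof is correct, but note that the paper itself contains no argument to compare it with: Lemma \ref{[V,U]_m-lim} is stated as being immediate from Lemma 3.3 of \cite{ks_almost}, so you have supplied a self-contained proof where the paper simply cites. Checking your argument: the reduction to a single prime $q\le m$ via the primary decomposition is sound (a nontrivial $[V_q,u]$ is a $q$-subgroup of the $m$-bounded set $[V,u]$, and faithfulness gives the embedding $U\hookrightarrow\prod_q U/C_U(V_q)$); the identity $[v,xy]=[v,x][v^x,y]$ really does yield $[V,U]=[V,u_1]\cdots[V,u_d]$ for any generating set, since each $[V,u_i]$ is a subgroup of the abelian group $V$, so the whole lemma reduces to bounding $|U|$; and both halves of the bound on the maximal abelian normal subgroup $A$ work: the exponent bound because $a$ is determined by its restriction to $[V,a]$ (via $V=[V,a]\oplus C_V(a)$), and the rank bound because the nontrivial weights of $\Omega_1(A)$ on $V/\Phi(V)\otimes\overline{\F}_q$ must generate the dual group by faithfulness, while the averaging argument produces an element moving at least half of them, forcing their number to be at most $2s$. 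Together with $C_U(A)=A$ for a maximal abelian normal subgroup of a $p$-group, this closes the argument. Two harmless slips: after reducing to one prime you announce the goal of bounding $|V|$, which is impossible (there may be a large summand on which $U$ acts trivially) and is not what you actually use --- the correct goal, which you do achieve, is to bound $|[V,U]|$; and the averaging gives ``at least $|S_0|/2$'' rather than ``more than'', which suffices. Your closing observation that the statement fails for $p=q$ (transvections with a common axis) is also correct and pinpoints exactly where coprimality enters; the trade-off between the two routes is simply that the citation keeps the paper short, while your argument makes the lemma independent of \cite{ks_almost} and makes the role of the hypothesis $q\ne p$ explicit.
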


Given a prime $p$, we write $O_{p'}(G)$ for the largest normal $p'$-subgroup of a finite group $G$. 
\begin{proposition}\label{[N,P]_m-lim}
Let $G=NP$ be a finite soluble group, where $P$ is a Sylow $p$-subgroup and $N = O_{p'}(G)$. Suppose that $|\mathscr{E}(g)| \leq m$ for all $g\in P$. Then the order of $[N,P]$ is $m$-bounded.
\end{proposition}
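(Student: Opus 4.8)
The plan is to reduce to the situation handled by Lemma \ref{[N,g]_m-lim} and Lemma \ref{[V,U]_m-lim} by controlling the Fitting height of the relevant sections. First I would replace $N$ by $[N,P]$; since $[N,P]$ is $P$-invariant and $N=O_{p'}(G)$, we may assume $N=[N,P]$, so that $G=NP$ with $N$ a $p'$-group, $P$ a Sylow $p$-subgroup, and $[N,P]=N$. By Lemma \ref{h_lim} (applied with any generator of a suitable cyclic subgroup of $P$) and the hypothesis on sinks, one gets that the Fitting height of $N\langle g\rangle$, hence ultimately of the $p'$-group $N$ under the action of $P$, is $m$-bounded; more precisely I would argue that each $g\in P$ satisfies $h(\langle\mathscr{E}(g)\rangle)\le 2m-2$, and since $N=[N,P]$ is generated by the subgroups $[N,g]=\langle\mathscr{E}(g)\rangle$ for $g$ ranging over $P$ (using Lemma \ref{E=[E,g]} and coprimality), the Fitting height $h(N)$ is $m$-bounded.

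Next I would induct on the Fitting height $h=h(N)$. When $h\le 1$, $N$ is nilpotent, and then $|[N,g]|$ is $m$-bounded for every $g\in P$ by Lemma \ref{[N,g]_m-lim}; passing to the abelianization of each Sylow component of $N$ and applying Lemma \ref{[V,U]_m-lim} to the action of $P$ (or rather of $P/C_P(V)$) on the relevant elementary abelian sections, together with Lemma \ref{cc}(iv) to lift back through the Frattini subgroup, shows $|[N,P]|=|N|$ is $m$-bounded. For the inductive step, set $F=F(N)$ (a nilpotent $p'$-group on which $P$ acts) and $\bar G=G/F$. In $\bar G$ the group $\bar N = N/F$ has Fitting height $h-1$ and still satisfies $\bar N=[\bar N,\bar P]$, so by induction $|[\bar N,\bar P]|=|\bar N|$ is $m$-bounded, i.e. $|N/F|$ is $m$-bounded. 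It remains to bound $|F|$, or rather $|[F,P]|$; since $N=[N,P]$ and $|N/F|$ is $m$-bounded, $F$ is generated by an $m$-bounded number of $G$-conjugates of the subgroups $[F,g]$, each of which has $m$-bounded order by Lemma \ref{[N,g]_m-lim} (as $F$ is nilpotent normal $p'$). Hence $|F|$ is $m$-bounded and therefore $|N|=|[N,P]|$ is $m$-bounded.

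The main obstacle I anticipate is the bookkeeping in the inductive step: knowing $|N/F|$ is $m$-bounded does not immediately bound the number of conjugates of $[F,g]$ needed to generate $F$, because a priori one must account for the action of all of $P$ on $F$, not just the $m$-bounded quotient. The clean way around this is to observe that $C_P(F)\supseteq C_P(\text{all of }N)$'s structure is controlled: more carefully, one should run the whole argument inside $\bar F = F/\Phi(F)$ viewed as a module for $P$, use that $[\bar F,P]=\bar F$ (by $N=[N,P]$ and Lemma \ref{cc}(iv)), and bound the order of the image of $P$ in $\mathrm{Aut}(\bar F)$ via Lemma \ref{[V,U]_m-lim} applied to each homogeneous component — this is where the hypothesis that \emph{every} $g\in P$ has small sink, not just one, is essential. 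Once the image of $P$ in $\mathrm{Aut}(\bar F)$ has $m$-bounded order, $\bar F=[\bar F,P]$ has $m$-bounded order, and Lemma \ref{cc}(iv) lifts this to $F$, completing the induction.
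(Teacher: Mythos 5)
Your overall strategy (reduce to $N=[N,P]$, bound the Fitting height of $N$, then induct on that height with Lemmas \ref{[N,g]_m-lim} and \ref{[V,U]_m-lim} handling the nilpotent base case) is the same as the paper's, but there are two genuine gaps.

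The critical one is the identification $[N,g]=\langle\mathscr{E}(g)\rangle$, on which your bound for $h(N)$ rests. Only one inclusion holds: Lemma \ref{E=[E,g]} gives $\langle\mathscr{E}(g)\rangle=[\langle\mathscr{E}(g)\rangle,g]\leq [N,g]$. In the other direction, what one actually gets (by passing to the quotient where the sink becomes trivial and using coprimality) is that $[N,g]$ equals the \emph{normal closure} of $\mathscr{E}(g)$ in $N\langle g\rangle$; the set $\mathscr{E}(g)$ is invariant under $\langle g\rangle$ and $C_G(g)$ but not under $N$, so $\langle\mathscr{E}(g)\rangle$ need not be normal and $[N,g]$ can be strictly larger. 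Bounding the Fitting height of that normal closure in terms of $h(\langle\mathscr{E}(g)\rangle)$ is precisely the hard content that the paper imports from \cite[Theorem 1.1]{ks_engel}: with $h_0=\max_g h(\langle\mathscr{E}(g)\rangle)\leq 2m-2$ (Lemma \ref{h_lim}), that theorem places every $g\in P$ inside $F_{h_0+1}(G)$, whence $[N,P]\leq F_{h_0+1}(G)$ and $h(N)\leq h_0+1$. Without this (or an equivalent substitute), your claim that $h(N)$ is $m$-bounded is unsupported, and the whole induction has no bounded length to run over.

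The second gap is in the inductive step. Neither ``$F$ is generated by an $m$-bounded number of conjugates of the $[F,g]$'' nor ``$[\bar F,P]=\bar F$'' follows from $N=[N,P]$: the condition $N=[N,P]$ constrains $N$ as a whole, not its Fitting subgroup, and $C_F(P)$ can a priori sit inside $[N,P]$ without lying in $[F,P]$. So bounding $|[F,P]|$ (which your module argument does achieve) does not bound $|F|$. The paper's way around this is different and worth noting: it bounds $|[F,P]|$ by the base case, observes that $N_G([F,P])\supseteq FP$ has $m$-bounded index (since $|N/F|$ is already bounded by induction), so the normal closure $X=\prod_{x\in G}[F,P]^x$ has $m$-bounded order; then in $G/X$ the image of $F$ is centralized by $\bar P$, hence central in $\bar N=[\bar N,\bar P]$, so $h(\bar N)\leq h-1$ and the induction hypothesis applies to all of $\bar N$ rather than to $\bar F$. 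This sidesteps any need to control $C_F(P)$ directly. Both gaps are repairable, but the first requires citing an external theorem that your proposal does not supply.
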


\begin{proof} Without loss of generality, we can assume that $O_p(G) = 1$ and $N=[N,P]$. We will first show that the Fitting height $h(N)$ of $N$ is $m$-bounded. For an arbitrary element $g\in P$ set $h_g=h(\langle\mathscr{E}(g)\rangle)$. In view of Lemma \ref{h_lim}  the numbers $h_g$ are bounded in terms of $m$ only. Set $$h_0 = \max\{h_g\mid g\in P\}.$$ According to \cite[Theorem 1.1]{ks_engel} for any $g\in P$ we have $g\in F_{h_0+1}(G)$. It follows that $[N,g] \subseteq F_{h_0+1}(G)$. Since this holds for any $g\in P$, we conclude that $[N,P]\subseteq F_{h_0+1}(G)$ whence $h(N) \leq h_0+1$, as required.
	
The proposition will be proved using induction on the Fitting height of $N$.
	
Suppose first that $h(N)=1$, that is, $N$ is nilpotent. The subgroup $P$ faithfully acts on $V=N/\Phi(N)$ and for any $g\in P$ we have $|[V,g]|\leq m$. In view of Lemma \ref{[V,U]_m-lim} we conclude that $|P|$ is $m$-bounded.
	
Note that $N$ is the product of subgroups of the form $[N,g]$ for $g\in P$. By Lemma \ref{[N,g]_m-lim} the order of any subgroup $[N,g]$ is $m$-bounded and since $|P|$ is $m$-bounded, it follows that the order of $N$ is $m$-bounded, too. This completes the case where $N$ is nilpotent. 
	
Assume now that $h(N)\geq2$ and let $M = F(N)$ be the Fitting subgroup of $N$. From the previous case we know that the order of $[M,P]$ is $m$-bounded. Moreover, by induction, the index of $M$ in $N$ is also $m$-bounded. Since $M$ normalizes $[M,P]$, it follows that $N_G([M,P])$ has $m$-bounded index. Set $X = \prod_{x\in G} [M,P]^x$ and observe that the order of $X$ is $m$-bounded. 
	
Set $\bar{G}=G/X$ and note that $\bar{P}$ centralizes $\bar{M}$. Therefore $\bar{N}=[\bar{N},\bar{P}]$ centralizes $\bar{M}$ and $\bar{M}\leq Z(\bar{N})$. It follows that $h(\bar{N})\leq h-1$. By induction, the oder of $\bar{N}$ is $m$-bounded. Since we have already shown that $|X|$ is $m$-bounded, we conclude that $|N|$ is $m$-bounded. This completes the proof.
\end{proof}

Whenever $a$ is an automorphism of a group $G$, we write $\mathcal{E}_G(a)$ to denote $\mathcal{E}(a)$ in the group $G\langle a\rangle$. We say that a group is semisimple if it is isomorphic to a direct product of non-abelian simple groups.

\begin{lemma}\label{G_semis_lim} Let $G$ be a semisimple group admitting an automorphism $\alpha$ such that $G=[G,\alpha]$ and $|\mathcal{E}_G(\alpha)| \leq m$. Then the order of $G$ is $m$-bounded. Consequently, the order of $\alpha$ is $m$-bounded, too.
\end{lemma}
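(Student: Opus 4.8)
\emph{Strategy.} The plan is to reduce to the situation where $\alpha$ permutes the simple factors of $G$ transitively, bound separately the number of factors and the order of a single factor, and then reassemble. First I would note that $\mathcal{E}_G(\alpha)$ generates $G$: since $[G,\langle\alpha\rangle]=[G,\alpha]$ and this subgroup is $\alpha$-invariant, the hypothesis $[G,\alpha]=G$ forces $[G,{}_i\alpha]=G$ for all $i\ge 1$, so taking $i$ larger than every $l(x,\alpha)$ gives $G=[G,{}_i\alpha]=\langle[x,{}_i\alpha]\mid x\in G\rangle\le\langle\mathcal{E}_G(\alpha)\rangle$. Hence $\langle\alpha\rangle$ acts faithfully on the at-most-$m$-element set $\mathcal{E}_G(\alpha)$, so $|\alpha|\le m!$ (this already yields the ``Consequently'' clause). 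Now write $G=G_1\times\cdots\times G_r$ with $G_j$ the product of the factors in one $\langle\alpha\rangle$-orbit; each $G_j$ is $\alpha$-invariant with $G_j=[G_j,\alpha]$, and $\mathcal{E}_G(\alpha)=\prod_j\mathcal{E}_{G_j}(\alpha)$. If some $\mathcal{E}_{G_j}(\alpha)=\{1\}$ then $\alpha$ is a left Engel element of $G_j\langle\alpha\rangle$, whence $[G_j,\alpha]=G_j$ is nilpotent by Baer's theorem \cite[Satz III.6.15]{baer_theo}, impossible for a nontrivial semisimple group. So each factor has at least two elements, $2^r\le m$, and $r$ is $m$-bounded; it therefore suffices to bound $|G_j|$. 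Thus we may assume $G=S_1\times\cdots\times S_k$ with $\alpha$ permuting the $S_i\cong S$ transitively. Then $k$ divides $|\alpha|$, so $k$ is $m$-bounded, and the automorphism $\beta$ that $\alpha^k$ induces on $S_1$ has $m$-bounded order.

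\emph{Bounding $|S|$.} For a prime $\ell\nmid|\alpha|$ I would pick a $\beta$-invariant Sylow $\ell$-subgroup of $S_1$ (coprime action) and transport it around the orbit to get an $\alpha$-invariant Sylow $\ell$-subgroup $Q=Q_1\times\cdots\times Q_k$ of $G$. For any $\beta$-invariant abelian subgroup $V_1\le Q_1$ the subgroup $V=V_1\times\cdots\times V_k$ is $\alpha$-invariant and abelian, and on it the map $x\mapsto[x,\alpha]$ is the endomorphism $\alpha-1$; by Lemma~\ref{cc} one has $V=C_V(\alpha)\oplus[V,\alpha]$ with $\alpha-1$ invertible on $[V,\alpha]$, so every element of $[V,\alpha]$ is fixed by a power of $x\mapsto[x,\alpha]$ and hence lies in $\mathcal{E}_G(\alpha)$. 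Since $|C_V(\alpha)|=|C_{V_1}(\beta)|\le|V_1|$ and $|V|=|V_1|^k$, this gives $|V_1|^{\,k-1}\le|[V,\alpha]|\le m$. When $k\ge 2$ this forces \emph{every} $\beta$-invariant abelian $\ell$-subgroup of $S$ to have order at most $m$; as only $m$-boundedly many primes divide $|\alpha|$, the classification of finite simple groups then bounds $|S|$. The case $k=1$ (where $\alpha$ stabilizes a single simple factor $S$ with $[S,\alpha]=S$) calls for a parallel but more delicate, again classification-based, argument: one shows that the Engel sink of a nontrivial automorphism of a simple group grows with the order of the group, using Zsygmondy-type primitive primes together with the structure of tori and unipotent subgroups, and using the fact (established above) that this sink is never trivial. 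Combining the bounds on $r$, $k$ and $|S|$ bounds $|G|=\prod_j|G_j|$, and then $|\alpha|\le|\mathrm{Aut}(G)|$ is $m$-bounded.

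\emph{Main obstacle.} Everything through the reduction ``$G=S^k$ with $\langle\alpha\rangle$ transitive and $k,|\alpha|$ $m$-bounded'' is soft and self-contained. The hard part is the passage from the smallness of $\mathcal{E}_G(\alpha)$ to a bound on the order of a single simple factor: ruling out large simple $S$ appears to require the classification of finite simple groups, either directly via $\beta$-invariant abelian $\ell$-subgroups as sketched above (for $k\ge 2$), or, for $k=1$, through a quantitative statement about Engel sinks of automorphisms of simple groups. I expect this simple-group input to be the crux of the proof.
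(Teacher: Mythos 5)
There is a genuine gap, in two places.

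First, your ``soft'' derivation that $\mathcal{E}_G(\alpha)$ generates $G$ does not work. The equality $[G,{}_i\alpha]=\langle[x,{}_i\alpha]\mid x\in G\rangle$ is false in general: by definition $[G,\alpha,\alpha]$ is generated by the commutators $[y,\alpha]$ where $y$ ranges over the whole subgroup $[G,\alpha]$ (so over arbitrary products of commutators), not merely over the single commutators $[x,\alpha]$. What follows formally from the definitions is only the inclusion $\langle\mathcal{E}_G(\alpha)\rangle\leq\bigcap_i[G,{}_i\alpha]=G$, which is the useless direction. The statement that $\mathcal{E}_G(\alpha)$ generates $G$ when $G=[G,\alpha]$ is precisely the Guralnick--Tracey theorem \cite[Theorem 1.4]{guratra} that the paper invokes at this point; it is a substantial (classification-dependent) result, not a formal consequence of $G=[G,\alpha]$. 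Since your bound $|\alpha|\leq m!$, your faithfulness claim, and your whole orbit decomposition hang on this generation statement, the gap propagates through the rest of the argument. (The conclusion you want is recoverable once you cite Guralnick--Tracey, but then the argument is no longer self-contained in the way you present it.)

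Second, and more seriously, the crux of the lemma --- bounding the order of a single simple factor --- is not proved; you explicitly defer it to an unspecified ``classification-based argument'' about Engel sinks of automorphisms of simple groups (and even your $k\geq2$ sketch leaves the primes dividing $|\alpha|$ unhandled). The paper sidesteps all of this: from $G=\langle\mathcal{E}_G(\alpha)\rangle$ and $Z(G)=1$ it gets $C_G(\mathcal{E}_G(\alpha))=1$, so both $C_G(\alpha)$ and $\langle\alpha\rangle$ act faithfully by permutations on the $m$-element sink, giving $|C_G(\alpha)|\leq m!$ and $|\alpha|\leq m!$; then Hartley's Brauer--Fowler-type theorem \cite[Theorem A]{hartley} (an automorphism of bounded order with centralizer of bounded order forces a soluble subgroup of bounded index) bounds $|G|$ at once, since a semisimple group with a soluble subgroup of bounded index has bounded order. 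In other words, the simple-group input you correctly identify as the obstacle is packaged into two cited theorems, and no factor-by-factor analysis is needed.
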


\begin{proof} Let $C = C_G(\alpha)$. Obviously $C$ normalizes the set $\mathcal{E}_G(\alpha)$. It follows that $|C / C_C(\mathcal{E}_G(\alpha))| \leq m!$.
	
By a theorem of Guralnick and Tracey \cite[Theorem 1.4]{guratra} the group $G$ is generated by $\mathcal{E}_G(\alpha)$. Since $G$ is a semisimple group, it follows that $C_G(\mathcal{E}_G(\alpha))=1$. Therefore, the order of $C$ is at most $m!$. The same argument shows  that the order of $\alpha$ is at most $m!$.
	
A well-known theorem of Hartley \cite[Theorem A]{hartley} now tells us that $G$ contains a soluble subgroup of $m$-bounded index. Since $G$ is semisimple,  the result follows.
\end{proof}

Let $G$ be a finite semisimple group and write $G= G_1 \times G_2 \times \cdots \times G_k$, where each $G_i$ is a non-abelian simple group. For an automorphism $\alpha$ of $G$ write $$C_0(\alpha) = G_{i_1} \times  \cdots \times G_{i_t}$$ to denote the product of all simple factors $G_i$ contained in $C_G(\alpha)$. Obviously we have $G = [G,\alpha] \times C_0(\alpha)$, where $[G,\alpha]$ is the product of the other $k-t$ simple factors.

Suppose there is another automorphism $\beta \in Aut(G)$ such that $[\alpha, \beta] = 1$ and $[G, \alpha, \beta] = 1$. We conclude that $[G,\alpha]\leq C_0(\beta)$ and $[G,\beta]\leq C_0(\alpha)$. Hence, $[G,\alpha]\cap[G,\beta]=1$ and $[G,\beta,\alpha]=1$. Moreover, $G=C_{0}(\alpha)\times C_{0}(\beta)$.

We therefore have
$$G_i^{\alpha\beta} = \left\lbrace \begin{array}{lc}
	G_i^{\alpha} & \text{, if } G_i \subseteq C_{0}(\beta)  \\
	G_i^{\beta} & \text{, if } G_i \subseteq C_{0}(\alpha)
\end{array} \right..$$
It follows that $[G, \beta][G, \alpha] \leq [G, \alpha\beta]$. 

In the sequel we will require the fact that if $G$ is a finite group in which every abelian subgroup has order at most $m$, then the order of $G$ is $m$-bounded (cf. \cite{ai,ag}).

\begin{lemma}\label{lema4}
Let $G$ be a semisimple group admitting a group of automorphisms $A$ such that $|\mathcal{E}_G(a)|\leq m$ for all $a \in A$. Then the order of $[G, A]$ is $m$-bounded.
\end{lemma}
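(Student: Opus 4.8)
The plan is first to reduce to the case where $G=[G,A]$ and $A$ acts faithfully on $G$ (replacing $A$ by its image in $\operatorname{Aut}(G)$ changes neither the hypothesis nor the subgroup $[G,A]$, and $[[G,A],A]=[G,A]$ since $[G,A]$ is a product of those simple factors lying in a nontrivial $A$-orbit-block), and then to bound separately the order of each simple factor of $G$ and the number of such factors. Write $G=G_1\times\cdots\times G_k$ and let $\Omega=\{G_1,\dots,G_k\}$ be the set of simple factors, on which $A$ acts by permutations.

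I would first bound $|G_i|$ for every $i$. For a fixed $a\in A$ the subgroup $[G,a]$ is the product of the simple factors \emph{not} centralised by $a$; moreover $[[G,a],a]=[G,a]$ and, since every long commutator $[x,{}_na]$ lies in $[G,a]$, the sink of $a$ computed inside $[G,a]\langle a\rangle$ equals $\mathcal{E}_G(a)$. Hence Lemma \ref{G_semis_lim} applies to the semisimple group $[G,a]$ with the automorphism induced by $a$ and gives $|[G,a]|\le f(m)$ for some function $f$ of $m$ alone. As $G=[G,A]$, each $G_i$ lies in some $[G,a]$, so $|G_i|\le f(m)$. Writing $S_a$ for the set of factors not centralised by $a$, we have $|[G,a]|=\prod_{G_i\in S_a}|G_i|\ge 60^{|S_a|}$, so $|S_a|\le c$ for an $m$-bounded number $c=c(m)$; in particular, applying Jordan's theorem (a transitive permutation group of degree $\ge2$ contains a fixed-point-free element) to an $A$-orbit $\Delta$ of size $s\ge2$ produces an element moving every factor of $\Delta$, whence $\Delta\subseteq S_a$ and $s\le c$. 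Thus every $A$-orbit on $\Omega$ has $m$-bounded size, and it only remains to bound the number $t$ of orbits, for then $k\le ct$ and $|G|\le f(m)^{ct}$ are $m$-bounded.

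This last step is the heart of the proof. For an orbit $\Delta$ let $H_\Delta$ be the product of the factors in $\Delta$; it is $A$-invariant with $|H_\Delta|\le f(m)^{c}$, so the image $A_\Delta$ of $A$ in $\operatorname{Aut}(H_\Delta)$ has $m$-bounded order, say $|A_\Delta|\le N=N(m)$. Faithfulness gives a subdirect embedding $A\hookrightarrow\prod_\Delta A_\Delta$, and since each $a\in A$ meets at most $c$ factors it acts nontrivially on at most $c$ of the $H_\Delta$. I would then invoke the combinatorial fact that a subdirect subgroup $B$ of a product of $t$ nontrivial groups of order $\le N$, in which every element is nontrivial in at most $c$ coordinates, has $t$ bounded in terms of $c$ and $N$: indeed $|B|\le (tN)^{c+1}$, so $B$ is generated by at most $(c+1)\log_2(tN)$ elements; each generator is nontrivial in at most $c$ coordinates, while every coordinate is nontrivial for some generator (as $B$ is subdirect), giving $t\le c(c+1)\log_2(tN)$, which forces $t\le T(c,N)$. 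Applying this with $c=c(m)$ and $N=N(m)$ bounds $t$, and the proof is complete. (Alternatively, one can run this combinatorial argument only for abelian subgroups of $A$ to conclude that every abelian subgroup of $A$ has $m$-bounded order, then use the quoted fact that this forces $|A|$ itself to be $m$-bounded, and finish via $|[G,A]|\le\prod_{a\in A}|[G,a]|\le f(m)^{|A|}$.)

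The main obstacle is precisely this final step: bounding the number of simple factors, equivalently the number of $A$-orbits on $\Omega$. The reduction and the per-factor and per-orbit bounds are essentially bookkeeping built on Lemma \ref{G_semis_lim} and Jordan's theorem, but controlling how many mutually ``independent'' orbits $A$ can possess requires the subdirect-product argument above, and arranging the quantitative dependence so that the final bound involves $m$ alone is the delicate point.
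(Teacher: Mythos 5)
Your proof is correct, but the decisive step is handled quite differently from the paper. Both arguments begin the same way: apply Lemma \ref{G_semis_lim} to each $[G,a]$ (legitimately, since the sink of $a$ computed in $[G,a]\langle a\rangle$ coincides with $\mathcal{E}_G(a)$) to get an $m$-bound on every $|[G,a]|$. From there the paper bounds $|A|$: it first reduces to abelian $A$ using the quoted fact that a finite group all of whose abelian subgroups have order at most $n$ has $n$-bounded order, and then runs a maximality argument — pick $a$ with $|[G,a]|$ maximal; if some nontrivial $b$ kills $[G,a]$ then the displayed observation $[G,a][G,b]\leq[G,ab]$ together with $[G,a]\cap[G,b]=1$ contradicts maximality, so $A$ acts faithfully on the $m$-bounded group $[G,a]$. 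You instead bound the number of simple factors of $G=[G,A]$ directly: Jordan's derangement theorem caps the size of each $A$-orbit of factors (since a derangement $a$ on an orbit forces the whole orbit into the $m$-bounded group $[G,a]$), and a generation-versus-support count in the subdirect embedding $A\hookrightarrow\prod_\Delta A_\Delta$ caps the number of orbits. Your route is longer and needs two extra ingredients (derangements and the $\log_2|B|$ generation bound), but it avoids the Aivazidis--Isaacs/Guralnick abelian-subgroup theorem in its main line and never needs the commuting-automorphism computation preceding the lemma; the paper's route is shorter but leans on that nontrivial quoted fact. One small point worth making explicit in your write-up: each $A_\Delta$ is nontrivial precisely because $G=[G,A]$ guarantees every factor is moved or non-centralised by some $a$, and this is what lets you say every coordinate is hit by some generator.
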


\begin{proof}
Observe that $[G, A] = \prod_{a \in A} [G, a]$. Lemma \ref{G_semis_lim} tells us that the order of $[G, a]$ is $m$-bounded for every $a\in A$. Therefore, it remains to show that $|A|$ is $m$-bounded. Since $|A|$ is bounded in terms of the orders of its abelian subgroups, without loss of generality we can assume that $A$ is abelian. Choose $a\in A$ such that $[G,a]$ is of maximal order. If $A$ acts faithfully on $[G,a]$, then the order of $A$ is $m$-bounded. Otherwise, choose a nontrivial automorphism $b\in A$ such that $[G,a,b] = 1$. Then, as mentioned earlier, $[G,b]\cap[G,a]=1$ and $[G,b,a] = 1$. We have $[G, b][G, a]\leq[G, ab]$, which contradicts the choice of $a$.
\end{proof}
It is well-known that any finite group $G$ has a normal series
$$
1=G_0\leq G_1\leq \cdots \leq G_{2h+1}=G
$$
such that $G_{i}/G_{i-1}$ is soluble (possibly trivial) if $i$ is odd and a direct product of non-abelian simple groups if $i$ is even. The minimal number $\lambda(G)$ of insoluble factors in such a series is called the insoluble length of $G$ \cite{ks_non}. As usual, the symbol $F^*(G)$ denotes the generalised Fitting subgroup of $G$.

\begin{lemma}\label{lim_com_n_sol}
Let $G$ be a group admitting an automorphism $a$ such that $|\mathcal{E}_G(a)| \leq m$ and $G=[G,a]$. Then $\lambda(G) \leq m-1$.
\end{lemma}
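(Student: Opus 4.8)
The plan is to induct on $m$. For $m=1$ the hypothesis forces $\mathcal{E}_G(a)=\{1\}$, so $a$ is a left Engel element of $G\langle a\rangle$; by Baer's theorem $a$ lies in the Fitting subgroup of $G\langle a\rangle$, hence $G=[G,a]$ is nilpotent and $\lambda(G)=0$. Assume now $m\geq 2$; if $G$ is soluble there is nothing to prove, so suppose $G$ is insoluble. Two observations will be used repeatedly. First, for any $a$-invariant normal subgroup $N\trianglelefteq G$ the image of $\mathcal{E}_G(a)$ under $G\to G/N$ is an Engel sink of the induced automorphism $\bar a$ (for each $\bar x$ all sufficiently long $[\bar x,{}_l\bar a]$ lie in it), so by minimality $|\mathcal{E}_{G/N}(\bar a)|\leq|\mathcal{E}_G(a)|$, and plainly $G/N=[G/N,\bar a]$; moreover the soluble radical of $G/N$ is trivial when $N$ is the soluble radical of $G$. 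Second, $\lambda(G)\leq\lambda(N)+\lambda(G/N)$ for every $N\trianglelefteq G$ (concatenate a normal series realising $\lambda(N)$ with the preimage of one realising $\lambda(G/N)$). Applying these with $N=R$ the soluble radical (so $\lambda(R)=0$), it suffices to treat the case $R=1$, and then $S:=F^*(G)=E(G)$ is a nontrivial semisimple group with $C_G(S)=1$.

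Next I would examine $\mathcal{E}_G(a)\cap S$, which is precisely the minimal Engel sink of $a$ in $S\langle a\rangle$ (both consist of the $x$ with $x=[x,{}_ia]$ for some $i\geq1$). Suppose first $\mathcal{E}_G(a)\cap S\neq\{1\}$, and pick $1\neq x\in\mathcal{E}_G(a)\cap S$. Then $x$ and $1$ are two distinct elements of $\mathcal{E}_G(a)$ with the same image under $G\to G/S$, so this projection fails to be injective on $\mathcal{E}_G(a)$ and therefore $|\mathcal{E}_{G/S}(\bar a)|\leq|\mathcal{E}_G(a)|-1\leq m-1$. Since also $G/S=[G/S,\bar a]$, the inductive hypothesis gives $\lambda(G/S)\leq m-2$, whence $\lambda(G)\leq\lambda(S)+\lambda(G/S)\leq 1+(m-2)=m-1$, as required.

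It remains to exclude $\mathcal{E}_G(a)\cap S=\{1\}$. By Lemma \ref{N_Engel} this gives $[S,{}_ka]=1$ for some $k$. Using the description recalled just before Lemma \ref{lema4} --- $S=[S,a]\times C_0(a)$, where $[S,a]$ is the product of the simple factors of $S$ not centralised by $a$ --- and applying it again to $a$ acting on the ($a$-invariant) semisimple group $[S,a]$, none of whose simple factors is centralised by $a$, one obtains $[[S,a],a]=[S,a]$, hence $[S,{}_ja]=[S,a]$ for all $j\geq1$; thus $[S,a]=1$, i.e.\ $a$ fixes $S$ pointwise. But $G$ acts faithfully on $S$ by conjugation, and for $g\in G$, $n\in S$ one has $n^{g^{a}}=(n^{g})^{a}=n^{g}$ (since $a$ fixes $S$ pointwise and $n^{g}\in S$), forcing $g^{a}=g$; so $[G,a]=1$, contradicting $G=[G,a]\neq1$. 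This contradiction closes the induction.

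The step I expect to be the real obstacle is the dichotomy on $\mathcal{E}_G(a)\cap S$ in the insoluble case, together with the structural identity $[[S,a],a]=[S,a]$ that collapses the second alternative; one must also be careful that passing to $G/N$ preserves both the bound on the sink and the condition $[G,a]=G$, and that $F^*(G)$ is semisimple only after the soluble radical has been factored out. Everything else is bookkeeping with Lemma \ref{N_Engel} and the facts about semisimple groups already recorded.
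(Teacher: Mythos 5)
Your proof is correct and follows essentially the same route as the paper: induct on $m$, factor out the soluble radical, show that $\mathcal{E}_G(a)$ must meet the semisimple subgroup $F^*(G)$ nontrivially, and apply the inductive hypothesis to $G/F^*(G)$ where the sink has shrunk by at least one. The only divergence is in justifying $\mathcal{E}_G(a)\cap F^*(G)\neq 1$: the paper appeals tersely to the non-nilpotency of $F^*(G)$ (i.e.\ via Lemma \ref{corr}/Baer, $[F^*(G),a]$ would be nilpotent and hence trivial), whereas you derive $[[S,a],a]=[S,a]$ from the $C_0$-decomposition — both correctly force $a$ to centralize $F^*(G)$ and contradict $G=[G,a]$.
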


\begin{proof} We will prove this by induction on $m$. Let $R = R(G)$ be the soluble radical of $G$. Passing to the quotient $G/R(G)$ without loss of generality we assume that $R=1$. Write $F^*({G}) = S_1 \times \cdots \times S_k$, where $S_i$ are non-abelian simple groups.
	
Taking into account that $G=[G,a]$, note that $\mathcal{E}(a) \cap F^*({G}) \neq 1$ since $F^*({G})$ is not nilpotent. Thus, $|\mathcal{E}_{{G}/F^*({G})}(a)| \leq m - 1$. By induction, we conclude that $\lambda({G}/F^*({G})) \leq m - 2$. Since $\lambda(F^*({G})) = 1$, it follows that $\lambda(G) \leq m-1$.
\end{proof}

The following observation is almost obvious. We include a proof for the reader's convenience.
\begin{lemma}\label{op_quo_lim} Let $G$ be a finite group such that $O_p(G) = 1$, and let $N$ be a normal subgroup of $G$. Then the order of $O_p(G/N)$ is bounded in terms of the order of $N$ only.
\end{lemma}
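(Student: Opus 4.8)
The plan is as follows. Write $O_p(G/N)=K/N$, where, by the correspondence theorem, $K$ is a normal subgroup of $G$ containing $N$ and $K/N$ is a $p$-group. Since $|K/N|\leq|K|$, it suffices to bound $|K|$ in terms of $|N|$. I would do this by splitting $K$ along the centraliser $D=C_K(N)$: the quotient $K/D$ is controlled by the automorphisms of $N$, while $D$ itself is forced to be small by the hypothesis $O_p(G)=1$.

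For the first part, conjugation gives a homomorphism $K\to\operatorname{Aut}(N)$ with kernel precisely $D=C_K(N)$, so $K/D$ embeds into $\operatorname{Aut}(N)$ and $|K:D|$ is $|N|$-bounded. It therefore remains to bound $|D|$.

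For the second part, the key observation is that $D=C_G(N)\cap K$ is an intersection of two normal subgroups of $G$, hence $D\trianglelefteq G$, and consequently $O_p(D)\leq O_p(G)=1$. Now $D\cap N=Z(N)$ (an element of $N$ centralising $N$ lies in $Z(N)$, and conversely $Z(N)\leq N\cap C_G(N)$), and since every element of $D$ centralises $N$ we also get $Z(N)\leq Z(D)$. As $DN/N$ is a subgroup of the $p$-group $K/N$, the quotient $D/Z(N)=D/(D\cap N)$ is a $p$-group; because $Z(N)\leq Z(D)$, the quotient $D/Z(D)$ is a $p$-group as well, so $D$ is nilpotent. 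A nilpotent group is the direct product of its Sylow subgroups, hence $D=O_p(D)\times O_{p'}(D)=O_{p'}(D)$ is a $p'$-group; but $D/Z(N)$ is a $p$-group, which forces $D=Z(N)$, and in particular $|D|\leq|N|$. Combining the two parts, $|K|=|K:D|\cdot|D|$ is $|N|$-bounded, as required.

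There is no serious obstacle here; the one thing worth isolating is the observation that $C_K(N)$ is normal not just in $K$ but in the whole of $G$, which is exactly what lets the global hypothesis $O_p(G)=1$ act on it. Everything else is routine: the embedding of $K/C_K(N)$ into $\operatorname{Aut}(N)$, and the elementary deduction that a group $D$ with $O_p(D)=1$ whose quotient by a central subgroup $Z$ is a $p$-group must coincide with $Z$.
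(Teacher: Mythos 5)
Your proof is correct and follows essentially the same route as the paper's: bound the index of the centralizer of $N$ in the preimage of $O_p(G/N)$ via the conjugation action on $N$, then use $O_p(G)=1$ to force that centralizer to be small. The paper works with a Sylow $p$-subgroup $P_0$ of the preimage and shows $C_{P_0}(N)$ is trivial outright, which avoids your detour through nilpotency and $Z(N)$, but the underlying mechanism is identical.
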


\begin{proof} Let $L/N = O_p(G/N)$, and let $P_0$  be a Sylow $p$-subgroup of $L$. Obviously, $L = NP_0$. Set $P_1 = C_{P_0}(N)$. Thus, the index $[P_0:P_1]$ is bounded in terms of the order of $N$ only. Note that $NP_1$ is normal in $G$, whence $P_1\leq O_p(NP_1) \leq O_p(G) = 1$. Therefore $|P_0|$ is $|N|$-bounded, as required.
\end{proof} 

For a fixed prime $p$,  whenever $N$ is normal subgroup of a finite group $G$ such that $O_p(G)=1$, we denote by $\widehat{N}$ the full preimage of $O_p(G/N)$. It follows from the previous lemma that $|\widehat{N}|$ is $|N|$-bounded.

\begin{lemma}\label{[G:O_p]_m-lim}
	Let $G$ be a finite soluble group and $P$ a Sylow $p$-subgroup of $G$. If $|P| \leq m$, then the index $[G : O_{p'}(G)]$ is at most $m!$.
\end{lemma}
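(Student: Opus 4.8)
The plan is to pass to the quotient $\bar G=G/O_{p'}(G)$ and to bound $|\bar G|$ from above by $m!$; since $[G:O_{p'}(G)]=|\bar G|$, this is exactly what is wanted. Observe that $\bar G$ is again finite and soluble, that $O_{p'}(\bar G)=1$, and that a Sylow $p$-subgroup of $\bar G$ is the image of $P$ and hence has order $|P|\le m$ (here $P\cap O_{p'}(G)=1$ because $P$ is a $p$-group). So from now on everything takes place inside $\bar G$.

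First I would invoke the classical fact that in a finite soluble group the Fitting subgroup contains its own centraliser. Since $O_{p'}(\bar G)=1$, the Fitting subgroup of $\bar G$ equals $Q:=O_p(\bar G)$, and $Q$, being a $p$-subgroup of $\bar G$, has order at most $m$. Consequently $C_{\bar G}(Q)\le Q$, and in particular $|C_{\bar G}(Q)|\le|Q|\le m$.

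Next, the conjugation action of $\bar G$ on $Q$ yields an embedding $\bar G/C_{\bar G}(Q)\hookrightarrow\mathrm{Aut}(Q)$. Since $\mathrm{Aut}(Q)$ acts faithfully on the set of $|Q|-1$ nontrivial elements of $Q$, we have $|\mathrm{Aut}(Q)|\le(|Q|-1)!$, and therefore $[\bar G:C_{\bar G}(Q)]\le(|Q|-1)!$. Multiplying this by $|C_{\bar G}(Q)|\le|Q|$ gives $|\bar G|\le|Q|\cdot(|Q|-1)!=|Q|!\le m!$, which is the required estimate.

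There is no serious obstacle in this argument; the only point that has to be handled carefully is the reduction identifying $F(\bar G)$ with the self-centralising subgroup $O_p(\bar G)$, which is precisely where solubility enters and which converts the hypothesis $|P|\le m$ into the permutation-group bound $|\bar G|\le|Q|!$.
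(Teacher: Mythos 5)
Your proof is correct and follows essentially the same route as the paper: reduce to $O_{p'}(G)=1$, use that $C_G(O_p(G))\leq O_p(G)$ in a soluble group, and embed $G/C_G(O_p(G))$ into $\mathrm{Aut}(O_p(G))$. Your final counting $|\bar G|\leq |Q|\cdot(|Q|-1)!=|Q|!\leq m!$ is in fact slightly more explicit than the paper's, which passes directly from $|\mathrm{Aut}(M)|\leq(m-1)!$ to $|G|\leq m!$.
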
 
\begin{proof}
We may assume that $O_{p'}(G) = 1$. Let $M = O_p(G)$. Since $C_G(M) \leq M$, it follows that $G/C_G(M)$ is isomorphic to a subgroup of $Aut(M)$. Obviously $|Aut(M)|\leq (m-1)!$ so we deduce that $|G|\leq m!$.
\end{proof} 

\begin{lemma}\label{[G:O_p,p']_m_lim} Let $G$ be a finite soluble group and $P$ a Sylow $p$-subgroup of $G$ such that $|\mathscr{E}(g)| \leq m$ for all $g\in P$. Then the index of $O_{p,p'}(G)$ in $G$ is $m$-bounded.
\end{lemma}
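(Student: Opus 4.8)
The plan is to pass to the quotient $\bar G = G/O_p(G)$, which satisfies $O_p(\bar G)=1$, and to prove there that $[\bar G:O_{p'}(\bar G)]$ is $m$-bounded; since $[G:O_{p,p'}(G)]=[\bar G:O_{p'}(\bar G)]$ this is exactly what is wanted. The hypothesis survives the quotient: the Sylow $p$-subgroup $\bar P$ of $\bar G$ is the image of $P$, and images of Engel sinks are Engel sinks, so $|\mathscr{E}(\bar g)|\leq m$ for every $\bar g\in\bar P$. Write $N=O_{p'}(\bar G)$. Since $\bar G$ is soluble and $O_p(\bar G)=1$, the Fitting subgroup $F(\bar G)=\prod_q O_q(\bar G)$ is a $p'$-group, so $F(\bar G)\leq N$ and consequently $C_{\bar G}(N)\leq C_{\bar G}(F(\bar G))\leq F(\bar G)\leq N$; in particular $C_{\bar P}(N)=\bar P\cap C_{\bar G}(N)=1$, i.e.\ $\bar P$ acts faithfully on $N$.

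Next I would apply Proposition \ref{[N,P]_m-lim} to $H=N\bar P$ to bound $D:=[N,\bar P]$. This is legitimate: a normal $p$-subgroup of $H$ has trivial intersection with the normal $p'$-subgroup $N$ and hence centralizes $N$, so it lies in $C_{\bar G}(N)\leq N$ and is trivial, giving $O_p(H)=1$; moreover $O_{p'}(H)/N\leq O_{p'}(H/N)=1$, so $O_{p'}(H)=N$, and $\bar P$ is a Sylow $p$-subgroup of the soluble group $H$; and the Engel sink in $H$ of any $g\in\bar P$ is contained in the minimal sink computed in $\bar G$, hence has cardinality at most $m$. Proposition \ref{[N,P]_m-lim} therefore gives that $|D|$ is $m$-bounded.

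The decisive step is to deduce that $\bar P$ itself, not merely $[N,\bar P]$, is small. By coprime action, $N=D\,C_N(\bar P)$ (Lemma \ref{cc}(i)). Hence any $q\in\bar P$ that centralizes $D$ also centralizes $N$ (it centralizes $C_N(\bar P)$ by definition), so $q\in C_{\bar P}(N)=1$; that is, $\bar P$ acts faithfully on $D$ and therefore embeds into $Aut(D)$. Since $|D|$ is $m$-bounded, so is $|\bar P|$. Finally, $\tilde G:=\bar G/N$ is soluble, has $O_{p'}(\tilde G)=1$, and has a Sylow $p$-subgroup isomorphic to $\bar P$, hence of $m$-bounded order; Lemma \ref{[G:O_p]_m-lim} now yields that $|\tilde G|=[\bar G:N]=[G:O_{p,p'}(G)]$ is $m$-bounded, as required.

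I expect the main obstacle to be precisely this passage from ``$[N,\bar P]$ is small'' to ``$\bar P$ is small'': Proposition \ref{[N,P]_m-lim} controls only the commutator subgroup $[N,\bar P]$, and one needs the coprime decomposition $N=[N,\bar P]\,C_N(\bar P)$ together with the faithfulness $C_{\bar P}(N)=1$ to see that the action of $\bar P$ on the small subgroup $[N,\bar P]$ is already faithful. The remaining ingredients — the reduction modulo $O_p(G)$, the identification of $O_p(H)$ and $O_{p'}(H)$, and the concluding appeal to Lemma \ref{[G:O_p]_m-lim} — are routine.
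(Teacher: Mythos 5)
Your proposal is correct and follows essentially the same route as the paper: reduce to $O_p(G)=1$, use $C_G(O_{p'}(G))\leq O_{p'}(G)$, bound $[O_{p'}(G),P]$ via Proposition \ref{[N,P]_m-lim}, deduce that $|P|$ is $m$-bounded, and finish with Lemma \ref{[G:O_p]_m-lim}. The step you single out as the main obstacle — passing from $[N,\bar P]$ small to $\bar P$ small via the coprime decomposition $N=[N,\bar P]\,C_N(\bar P)$ and faithfulness — is exactly the detail the paper's one-line ``Therefore, $P$ has $m$-bounded order'' leaves implicit, and your justification of it is correct.
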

\begin{proof} Without loss of generality, we may assume that $O_p(G) = 1$. Set $N = O_{p'}(G)$ and observe that $C_G(N)\leq N$. In particular, the action of $P$ on $N$ is faithful. By Lemma \ref{[N,P]_m-lim}, the order of $[N,P]$ is $m$-bounded. 
	
Therefore, $P$ has $m$-bounded order, which, in view of Lemma \ref{[G:O_p]_m-lim}, implies that $N$ has $m$-bounded index in $G$.
\end{proof}

We are ready to prove Theorem \ref{main1}, which we restate here for the reader's convenience.\medskip

\noindent {\it Let $p$ be a prime and $G$ a finite group such that $|\mathscr{E}(g)| \leq m$ for all $p$-elements $g\in G$. Then $G$ has a normal subgroup $N$ such that $G/N$ is a $p'$-group and the index $[N:O_p(G)]$ is bounded in terms of $m$ only.}

\begin{proof} Without loss of generality, we may assume that $O_p(G)=1$. Choose a Sylow $p$-subgroup $P\leq G$ and set $N=\langle P^G\rangle$. It is sufficient to show that the order of $N$ is $m$-bounded. Since $N=\langle P^N\rangle$, without loss of generality we may assume that $G=N$. We now need to show that $G$ has $m$-bounded order. 
	
Suppose first that $G$ is soluble and set $M=O_{p'}(G)$. By Lemma \ref{[G:O_p,p']_m_lim}, the subgroup $M$ has $m$-bounded index in $G$. Therefore the order of $P$ is $m$-bounded. Let $P=\{g_1,\ldots,g_s\}$. Since $[M,P]=\prod_{i=1}^{s}[M,g_i]$, in view of Lemma \ref{[N,P]_m-lim} the order of $[M,P]$ is $m$-bounded. Thus, $\langle[M,P]^G\rangle$ has $m$-bounded order.
	
Let $\widehat{L}$ be the full preimage of $O_p(G/\langle[M,P]^G\rangle)$. By virtue of Lemma \ref{op_quo_lim}, the order of $\widehat{L}$ is $m$-bounded. Therefore we can pass to the quotient $G/\widehat{L}$. Observe that the image of $M$ in $G/\widehat{L}$ is central. Hence, $Z(G/\widehat{L})$ has $m$-bounded index. Therefore, by Schur's Theorem \cite[Theorem 10.1.4]{robinson}, we conclude that the commutator subgroup of $G/\widehat{L}$ has $m$-bounded order. We can now pass to the quotient $G/G'$ and assume that $G$ is abelian. Keeping in mind that $G=\langle P^G\rangle$ we deduce that $G$ has $m$-bounded order, as required.

We now drop the assumption that $G$ is soluble. Since $G=\langle P^G\rangle$, we infer from Lemma \ref{lim_com_n_sol} that $\lambda(G)$ is $m$-bounded. We will now use induction on $\lambda(G)$.

Write $R=R(G)$ for the soluble radical of $G$ and suppose first that $R=1$. Let $S=F^*(G)$. By Lemma \ref{lema4}, the order of $[S,P]$ is $m$-bounded. By induction, also the index of $S$ in $G$ is $m$-bounded. It follows that the order of $\langle [S,P]^G\rangle$ is $m$-bounded as well.
	
Observe that $S=\langle[S,P]^G\rangle$. Indeed, the image of $S$ in $G/\langle[S,P]^G\rangle$ is central. Since $S$ is a product of non-abelian simple groups, it follows that $S=\langle[S,P]^G\rangle$.
	
Thus, the induction on $\lambda(G)$ shows that in the case where $R=1$ the order of $G$ is $m$-bounded.
	
We therefore conclude that $R$ has $m$-bounded index in $G$. We know that the theorem holds for soluble groups. Applying the theorem to the subgroup $RP$, we deduce that the order of $[R,P]$ is $m$-bounded. Hence, so is the order of $\langle[R,P]^G\rangle$. Let $N$ be the full preimage in $G$ of $O_p(G/\langle[R,P]^G\rangle)$. By Lemma \ref{op_quo_lim} the order of $N$ is $m$-bounded. The image of $R$ in the quotient $G/N$ is central. It follows from Schur's Theorem that the order of $G'$ is $m$-bounded. Thus, passing to the quotient $G/G'$, we can assume that $G$ is abelian, and the result is immediate.
\end{proof}

\section{Profinite groups}

In this section we prove Theorem \ref{main2}. In what follows, unless stated otherwise, a subgroup of a profinite group will always mean a closed subgroup, all homomorphisms will be continuous, and quotients will be by closed normal subgroups. We also say that a subgroup is generated by a subset $X$ if it is generated by $X$ as a topological group.

We consider profinite groups $G$ with finite Engel sinks of $p$-elements. Whenever $g$ is a $p$-element of such a profinite group there is the smallest Engel sink of $g$, which, just as in the case of finite groups, we denote by $\mathscr{E}(g)$.

We write $FC(G)$ for the $FC$-centre of a group $G$, that is, the set of all elements $g\in G$ such that $|g^G|<\infty$. Note that if $G$ is a profinite group, then $FC(G)$ need not be closed so in this case we treat $FC(G)$ as merely a subset of $G$ rather than a subgroup. An element of $G$ lying in $FC(G)$ is called an $FC$-element. A useful fact, known as Dicman's lemma, is that if $x$ is an $FC$-element of finite order, then the normal subgroup $\langle x^G\rangle$ is finite \cite[Theorem 14.5.7]{robinson}. 

We write $F(G)$ for the Fitting subgroup, that is, the maximal normal pronilpotent subgroup of a profinite group $G$. We use $O_p(G)$ to denote the maximal normal pro-$p$ subgroup of $G$.

We will require the following theorem, which is immediate from \cite[Theorem 1.3]{as}.

\begin{theorem}\label{as} Let $p$ be a prime and $G$ a profinite group in which all $p$-elements are $FC$. Then $G$ has an open subgroup of the form $P\times Q$, where $P$ is an abelian pro-$p$ subgroup and $Q$ is a pro-$p'$ subgroup.
\end{theorem}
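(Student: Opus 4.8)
The quick route is to invoke \cite[Theorem 1.3]{as} directly: one checks that the hypothesis ``every $p$-element is FC'' is exactly the finiteness condition appearing there and reads off the asserted open subgroup $P\times Q$. For a self-contained argument I would proceed as follows. The hypothesis says precisely that every $p$-element $g$ of $G$ has open centralizer $C_G(g)$. Fix a Sylow pro-$p$ subgroup $P_0$ of $G$. Every element of $P_0$ is a $p$-element, so $P_0$ is a profinite $FC$-group; my first goal is to show that $P_0$ is abelian-by-finite, and my second is to split an abelian pro-$p$ subgroup off as a \emph{direct} factor of an open subgroup of $G$.

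For the first goal I use the Baire category theorem. For each $n$ the set $S_n=\{g\in P_0:\ |g^{P_0}|\le n\}$ is closed: if $g_i\to g$ with $|g_i^{P_0}|\le n$ and $x_1,\dots,x_{n+1}$ are pairwise distinct conjugates of $g$, then continuity of conjugation separates the corresponding conjugates of $g_i$ for large $i$, forcing $|g_i^{P_0}|\ge n+1$. Since $P_0=\bigcup_n S_n$ and $P_0$ is a Baire space, some $S_N$ has nonempty interior, hence contains a coset $xV$ of an open subgroup $V\trianglelefteq P_0$; comparing $xv$ with $x$ via $v=x^{-1}(xv)$ gives $|v^{P_0}|\le N^2$ for all $v\in V$, so $V$ is a $BFC$-group. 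By B.\,H.~Neumann's theorem $V'$ is finite. Passing to $W=C_V(V')$, which is open since $V'$ is finite and each of its elements has open centralizer, I may assume $W'\le Z(W)$ is finite, so $W$ has nilpotency class $\le 2$. Then $a\mapsto[a,\cdot]$ embeds $W/Z(W)$ into $\mathrm{Hom}_{\mathrm{cont}}(W,W')$, which is countable because continuous homomorphisms into the finite group $W'$ factor through finite quotients of $W$; a countable profinite group is finite, so $Z(W)$ is open. Thus $A:=Z(W)$ is an open abelian subgroup of $P_0$.

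It remains to realize $A$ as a central direct factor of an open subgroup of $G$. Here I use the hypothesis for \emph{all} $p$-elements, not merely topological generators: every element of the abelian pro-$p$ group $A$ is a $p$-element of $G$ and hence has finite $G$-conjugacy class, which prevents the conjugation action of $G$ on $A$ from having unbounded orbits; a further compactness argument then forces an open subgroup $H\le G$ to centralize $A$. Shrinking $H$ so that $A$ becomes the full Sylow pro-$p$ subgroup of $H$, using that $|P_0:A|$ and $|P_0'|$ are finite, $A$ is a central, normal Hall pro-$p$ subgroup of $H$ with pro-$p'$ quotient; profinite Schur--Zassenhaus provides a complement $Q$, and centrality of $A$ makes the product direct, $H=A\times Q$. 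Taking $P=A$ completes the proof.

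\textbf{Main obstacle.} The clean part is taming the Sylow pro-$p$ subgroup via the Baire--Neumann--countability step. The crux is the centralization step: upgrading the pointwise information ``each element of $A$ has open centralizer'' to a single open subgroup centralizing all of $A$, and then guaranteeing that the abelian pro-$p$ part is an honest normal Hall subgroup so that Schur--Zassenhaus yields a \emph{direct} factor rather than merely a semidirect one. This is exactly where the strength of the hypothesis (finiteness of the sink, i.e. the $FC$ condition, for \emph{every} $p$-element, including those of infinite order) is indispensable, as the example $\mathbb{Z}_p\rtimes\mathbb{Z}_\ell$ with a nontrivial action already illustrates.
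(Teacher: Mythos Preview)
The paper does not prove this theorem; it simply records it as ``immediate from \cite[Theorem 1.3]{as}''. Your opening sentence already matches this exactly, so at the level of what the paper actually does, your proposal is correct and aligned with it.

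Your additional self-contained sketch is on the right track but has two issues worth flagging. First, the claim that $\mathrm{Hom}_{\mathrm{cont}}(W,W')$ is countable is false: factoring through finite quotients is correct, but a profinite group can have uncountably many open normal subgroups of bounded index (think of $(\Z/p\Z)^{\N}$). The desired conclusion that $Z(W)$ is open is nevertheless true, via a shorter argument: since $W'$ is finite there is an open normal $N\trianglelefteq W$ with $N\cap W'=1$; then $N'\le N\cap W'=1$, so $N$ is abelian, and $[N,W]\le N\cap W'=1$, hence $N\le Z(W)$. Second, the ``further compactness argument'' forcing an open subgroup of $G$ to centralise all of $A$ is precisely the substantive content of the Acciarri--Shumyatsky theorem you are trying to reprove; Baire gives an open $B\le A$ with $|b^{G}|\le M$ for all $b\in B$, but uniformly bounded orbit size does not by itself yield a single open subgroup centralising $B$ (the relevant centralisers, though each of index at most $M$, can still intersect in a non-open subgroup). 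You correctly identify this as the main obstacle, but the sketch does not actually clear it, so your self-contained route remains incomplete at exactly the point where the cited result does the work.
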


We will now prove Theorem \ref{main2}.\medskip

\noindent {\it Let $p$ be a prime and $G$ a profinite group in which every $p$-element possesses a finite Engel sink. Then $G$ has a normal subgroup $N$ such that $N$ is virtually pro-$p$ while $G/N$ is a pro-$p'$ group.}
\begin{proof}
For any $p$-element $g\in G$ let $N_g$ denote the maximal open normal subgroup of $G$ whose intersection with $\mathscr{E}(g)$ is trivial. Observe that whenever $x\in N_g$ there is a positive integer $i$ such that $[x,{}_ig]=1$. It follows that $g\in F(N_g\langle g\rangle)$ and, more precisely, $g\in O_p(N_g\langle g\rangle)$. We conclude that $[N_g,g]$ is a subnormal pro-$p$ subgroup of $G$ and in particular $[N_g,g]\leq O_p(G)$.

Hence, all $p$-elements of $G/O_p(G)$ are $FC$. Since $G/O_p(G)$ does not contain nontrivial normal pro-$p$ subgroups, it follows from Theorem \ref{as} that $G/O_p(G)$ is virtually pro-$p'$ and so Sylow $p$-subgroups of $G/O_p(G)$ are finite. Taking into account that Sylow $p$-subgroups of $G/O_p(G)$ consist of $FC$-elements and using Dicman's lemma, we deduce that if $P$ is a Sylow $p$-subgroup of $G$, then the image of $\langle P^G\rangle$ in $G/O_p(G)$ is finite. Therefore the subgroup $N=\langle P^G\rangle$ has the required properties. The proof is complete.
\end{proof}

\end{document}